\providecommand{\U}[1]{\protect\rule{.1in}{.1in}}
\providecommand{\U}[1]{\protect\rule{.1in}{.1in}}
\providecommand{\U}[1]{\protect\rule{.1in}{.1in}}
\providecommand{\U}[1]{\protect\rule{.1in}{.1in}}
\providecommand{\U}[1]{\protect\rule{.1in}{.1in}}
\theoremstyle{theorem}
\newtheorem{definition}{Definition}
\newtheorem{proposition}{Proposition}
\newtheorem{corollary}{Corollary}
\newtheorem{thm}{Theorem}
\newtheorem{lemma}{Lemma}
\newtheorem{rmk}{Remark}
\newtheorem{example}{Example}
\newtheorem{RCF}{Rationality Conjecture for Flag}
\newcommand{\F}{\mathcal{F}}
\newcommand{\bc}{\begin{center}}
\newcommand{\ec}{\end{center}}
\newcommand{\noi}{\noindent}
\numberwithin{equation}{section}
\newcommand{\codim}{\operatorname{codim}}
\newcommand{\R}{\Bbb{R}}
\newcommand{\G}{\mathcal{G}}
\begin{document}
\title{Residues for flags of holomorphic foliations}

\author[Jean-Paul Brasselet]{Jean-Paul Brasselet}
\address{\sc Jean-Paul Brasselet\\
I2M-CNRS\\
Luminy Case 930\\
F-13288 Marseille Cedex 9
\\ France}
\email{jean-paul.brasselet@univ-amu.fr}

\author[Maur\'icio Corr\^ea]{Maur\'icio Corr\^ea}
\address{\sc Maur\'icio Corr\^ea\\
UFMG\\
Avenida Ant\^onio Carlos, 6627\\
30161-970 Belo Horizonte\\ Brazil}
\email{mauricio@mat.ufmg.br}

\author[Fernando Louren\c{c}o]{Fernando Louren\c{c}o}
\address{\sc Fernando Louren\c{c}o\\
UFLA\\
Av. Doutor Sylvio Menicucci, 1001\\
Kennedy
37200000 - Lavras \\Brasil \\
}
\email{fernando.lourenco@dex.ufla.br}

\date{\today}
\subjclass[2010]{Primary 57R30; Secondary  14J45, 57R32, 53C12}
\keywords{Residues, Holomorphic foliations, Characteristic classes.}

\begin{abstract}
 In this work we prove a   Baum-Bott type residue theorem for flags of holomorphic foliations.
 We prove some relations between the residues of the  flag and the residues of their correspondent foliations.  
 We define the  Nash residue for flags and  we give a partial answer to the  Baum-Bott  type rationality conjecture in this context .
\end{abstract}

\maketitle

\setcounter{tocdepth}{1}

\tableofcontents
\section*{Introduction}

A $2$-flag of foliations is a pair of foliations $(\F_1,\F_2)$
such  that the leaves of $\F_1$ are contained in the leaves  of
$\F_2$. We call $\F_1$  a subfoliation of $\F_2$. In this work, we
considerer  $2$-flags  formed by $2$ holomorphic foliations on a
complex manifold $M$ of dimension $n$. Our main result is a
Baum-Bott type residue theorem. 
We believe that the study of characteristic
classes of singular flags  of holomorphic foliations  can  be useful to give information about these structures.
The study of characteristic classes
of real foliations was firstly considered by   Feigin [\ref{Fei}],
where he proposed two constructions for characteristic classes of
flags in an attempt to answer a question about   topological
obstruction for integrability. Several other authors studied
characteristic classes of flags,   \cite{CorMa,Domin}.

In the holomorphic context, flags of holomorphic foliations appear
naturally on works about foliations with algebraic and rationally
connected leaves: Miyaoka \cite{Miy}, Bogomolov-Mcquillan \cite{BM},
and Kebekus-Sola Conde-Toma \cite{KST}. Other very important
situation where appear flags of foliations is the so called
Brunella's conjecture:
\\
 \textit{If $\F$ is a codimension one holomorphic foliation on $\mathbb{P}^{3}$, then $\F$ either  admits an algebraic invariant surface or admits a  subfoliation by algebraic curves.}

In the case where $\F$ admits a  subfoliation by algebraic curves we
have a $2$-flag. Then the study of the singularities of these
foliations is important.  Some authors studied
flags of singular holomorphic foliations: in [\ref{CoSo}] Corr\^ea
and Soares  proved an inequality involving the degrees of two
distributions (foliations) which form a flag on projective spaces.
 Mol in [\ref{Mol}] studied the polar classes  of    flags of holomorphic
 foliations.
\\
\\
\paragraph{\bf Acknowledgments.}
We are grateful to Maria Aparecida Soares  Ruas  and Marcio Soares for
interesting and fruitful conversations.
The first named author thanks partial support by FAPESP Process: 2015/06697-9 and BREUDS European programme.
 The second named author thanks partial support by 
CAPES, CNPq, FAPEMIG and FAPESP grant 2015/20841-5.

\section{Baum-Bott residues theory for flags}

The main results of this section are a  Bott vanishing theorem and a
Baum-Bott theorem for flags, where the first one is  "finer" than 
the classical result of Bott. The reason is that  our  theorem
detects some characteristic classes of flags that the Bott vanishing
theorem for foliations does not detect. These classes are legitimate
of flags, see remark \ref{2.17}. Here we apply the localization
theory of characteristic classes developed mainly by   Lehmann and
 Suwa to localize the characteristic classes. For details
on  this theory we refer to  [\ref{Suwa}].

\subsection{Flags of holomorphic foliations}

Let $M$ be a connected complex manifold of dimension $n$. Let us
denote by $\Theta_{M}$ the tangent sheaf of $M$.

\begin{definition} A singular holomorphic foliation $\mathcal{F}$ of
dimension $q$ on $M$ is a coherent subsheaf of rank $q$ of
$\Theta_{M}$ that is involutive. \noi Here involutive means that the
stalks of $\F$ are invariant by   Lie bracket. We denote by $k =
n-q$ the codimension of the foliation $\mathcal{F}.$

\end{definition}

Let us denote by $\mathcal{N}_{\mathcal{F}}$ the quotient sheaf
$\Theta_{M} / \mathcal{F}$, so that we have the exact sequence.
$$ 0 \longrightarrow \mathcal{F} \longrightarrow \Theta_{M}
\longrightarrow \mathcal{N}_{\mathcal{F}} \longrightarrow 0. $$

We define the singular set $S(\mathcal{F})$ of the foliation $
\mathcal{F} $ to be the singular set
 S$(\mathcal{N}_{\mathcal{F}})$ of the normal sheaf of the
foliation. We will assume that $\codim{S(\mathcal{F})}\geq 2. $ A
regular foliation $\F$ is a foliation such that S($\F)$ is empty.

Let $\F\subsetneq \Theta_{M}$ a distribution  of dimension  $r$ on
$M$.  The sheaf $N^*_\F:=(\Theta_{M}/\F)^*$ is a saturated subsheaf
of $\Theta_{M}^*=\Omega^1_M$ of rank equal to $\codim(\F)=k$. The $k$-th
wedge product of the inclusion $N^*_\F\subset \Omega^1_M$ gives rise
to a nonzero twisted holomorphic  $k$-form $\omega\in H^{0}(M,
\Omega_{M}^{k}\otimes \det(N_{\F}))$.

\begin{example}
Let $\mathbb{P}^{n}$ be the complex projective space of dimension
$n$. A holomorphic foliation $\mathcal{F}$ on $\mathbb{P}^{n}$, of
codimension $k$,    is given by an integrable twisted $k$-form
$$
\omega \in H^{0}(\mathbb{P}^{n}, \Omega_{\mathbb{P}^{n}}^{k}\otimes
\mathcal{N}),
$$
 where $\mathcal{N} = \det \mathcal{N}_{\mathcal{F}}$. Moreover, we can define the degree of $\mathcal{F}$, denoted by $\mbox{deg}(\mathcal{F}) \in \mathbb{Z}$,
 as the degree of the zero locus of $\iota^{\ast} \omega$, where $\iota : \mathbb{P}^{k} \longrightarrow \mathbb{P}^{n}$
 is a linear embedding of a generic $k$-plane. Since $\Omega_{\mathbb{P}^{n}}^{k} = \mathcal{O}_{\mathbb{P}^{k}}(-k-1)$ it follows at once that $\mathcal{N} = \mathcal{O}_{\mathbb{P}^{n}}(deg(\mathcal{F}) + k +1).$
\end{example}

\begin{definition} Let $\mathcal{F}_{1},...,\mathcal{F}_{t}$ be $t$
holomorphic foliations on $M$ of dimensions $q = (q_{1},...,q_{t})$.
We say that $\mathcal{F} := (\mathcal{F}_{1},...,\mathcal{F}_{t})$
is a flag of holomorphic foliations if for each $i = 1,...,t-1, \ \ \mathcal{F}_{i}$ is a coherent
sub $\mathcal{O}_{M}$-module of $\mathcal{F}_{i+1}.$

\end{definition}

In the above definition, we say that $\mathcal{F}_{i}$ leaves
$\mathcal{F}_{j} (i < j) $ invariant for each $i = 1,\dots ,t-1$.
Note that, for $ x \in M \setminus \cup_{i = 1}^{t} S
(\mathcal{F}_{i})$ the inclusion relation $ \mathcal{F}_{x,1}
\subset \dots \subset
 \mathcal{F}_{x,t}$ holds, giving that the leaves of
$\mathcal{F}_{i}$ are contained in leaves of $\mathcal{F}_{j}$. When
$t = 2$, we have a diagram of   exact sequences of sheaves:
$$ \xymatrix{ 0 \ar[rd]  &  & 0 \ar[ld] &  & 0  \\
   & \mathcal{F}_{1} \ar[rd] \ar[dd] &  & \mathcal{N}_{2} \ar[lu] \ar[ru] &  \\
   &  & \Theta_{M} \ar[ru] \ar[rd]  &  &  \\
   & \mathcal{F}_{2} \ar[ru]   \ar[rd] &  & \mathcal{N}_{1} \ar[uu]  \ar[rd] &  \\
 0 \ar[ru] &  & \mathcal{N}_{1,2} \ar[ru] \ar[rd]  &  & 0 \\
  & 0 \ar[ru] &   &  0 &  }$$
This diagram appears in the studies of Feigin in real case, see
[\ref{CorMa}, pg 64]. We define the singular set $S(\mathcal{F})$ of
the flag $\mathcal{F}$ to be the singular set $S(\mathcal{F}_{1})
\cup ... \cup S(\mathcal{F}_{t})$ and $\mathcal{N}_{\mathcal{F}} =
\mathcal{N}_{1,2} \oplus ... \oplus \mathcal{N}_{t-1, t} \oplus
\mathcal{N}_{t}$ be the normal sheaf of the flag, where
$\mathcal{N}_{i, j}$ is the quotient sheaf $ \mathcal{F}_{i} /
\mathcal{F}_{j} (i < j)$.

\begin{example} Let  $\pi : M \longrightarrow Y $ a surjective holomorphic map,
where $M$ and $Y$ are complex manifolds. Given a regular holomorphic
foliation $\mathcal{G}$ of codimension one on $Y$ one has that
$\mathcal{F}_{2} := \pi^{\ast} \mathcal{G}$ is a codimension one
foliation on $M$. We denote by $\mathcal{F}_{1}$ the foliation
induced by the  fibration $\pi$. Observe that
$S(\mathcal{F}_{1})$ is the singular set of the map $\pi.$ Then
$\mathcal{F} = (\mathcal{F}_{1}, \mathcal{F}_{2} )$ is a flag on $M$
such that $S(\mathcal{F}_{1}) = S(\mathcal{F}_{2})$.

\end{example}

\begin{example} A meromorphic map $\varphi : M \dashrightarrow Y$, where $M$ and $Y$ are complex manifolds, is a first integral of a foliation $\mathcal{F}$ on $M$ if the leaves of $\mathcal{F}$
are contained in the fibers of $\varphi$. In this situation,
$\mathcal{F}$ is a subfoliation of the singular foliation induced by
$\varphi$.

\end{example}

\subsection{Chern-Weil theory of characteristic classes}

We will start this subsection by a review on Chern-Weil theory of
characteristic classes of vector bundles.

We denote by
$A^p(M)$ the complex vector space of complex valued $C^{\infty}$ $p$-forms on $M$. Also,
we let $A^p(M,E)$ be the vector space of  $C^{\infty}$ sections of the bundle
$\bigwedge^p{(T_{\R}^cM)}^*\otimes E$ on $M$, where ${(T_{\R}^cM)}^*$ denotes
the dual of the complexification of the real tangent bundle $T_{\R}M$ of $M$.
Finally, we denote by  $TM$  the  holomorphic tangent
bundle of $M$.

\begin{definition} A connection for a complex vector bundle $E$ on $M$ is
a $\mathbb{C}$-linear map
$$ \nabla: A^{0}(M,E) \longrightarrow A^{1}(M,E)$$
such that
$$ \nabla(fs) = df \otimes s + f \nabla (s) \ \ \ \mbox{for} \ \ \ f \in A^{0}(M) \ \ \
\mbox{and} \ \ \ s \in A^{0}(M,E).$$

\end{definition}
If $H$ is a subbundle of the complexified tangent bundle $T^{c}M$,
then its dual $H^{\ast}$ is canonically viewed as a quotient of
$(T_{\R}^cM)^{\ast}$. We denote by $\rho$ the canonical projection $
(T_{\R}^cM)^{\ast} \longrightarrow H^{\ast}$.

\begin{definition} A partial connection for $E$ is a pair $(H,\delta)$ of
a subbundle $H$ of $T_{\R}^cM$ and a $\mathbb{C}$-linear map
$$ \delta : A^{0}(M,E) \longrightarrow A^{0}(M,H^{\ast}\otimes E)$$
such that
$$ \delta(f s) = \rho (df) \otimes s + f \delta (s) \ \ \ \mbox{for} \
\ \ f \in A^{0}(M) \ \ \ and \ \ \ s \in A^{0}(M,E).$$
\end{definition}

\begin{definition} Let $(H,\delta)$ be a partial connection for $E$. We say that a connection $\nabla$ for $E$ extends $(H,\delta)$ if the following  diagram is commutative
$$ \xymatrix{ A^{0}(M,E) \ar[r]^{\nabla} \ar[rd]_{\delta}  &  A^{0}(M,(T_{\R}^cM)^{\ast}\otimes E) \ar[d]^{\rho \otimes Id} \\  & A^{0}(M,H^{\ast}\otimes E) } $$

\end{definition}

\begin{lemma} Every  partial connection for $E$ admits  a connection that extends it.

\end{lemma}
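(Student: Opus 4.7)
The plan is to build the extending connection first locally on trivializing patches and then glue by a partition of unity, using the fact that the set of connections extending $(H,\delta)$ is affine over the appropriate space of bundle-valued $1$-forms, so that convex combinations of extensions are extensions.

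First I would work locally. Over a sufficiently small open $U\subset M$ choose a holomorphic frame $e_1,\dots,e_r$ of $E|_U$. Then $\delta(e_i)=\sum_j \eta_{ij}\otimes e_j$ for some $\eta_{ij}\in A^0(U,H^*)$. Since $\rho:(T^cM)^*\to H^*$ is a surjective morphism of smooth vector bundles, it admits a $C^\infty$ splitting over $U$ (after possibly shrinking $U$); using that splitting, lift each $\eta_{ij}$ to a $1$-form $\theta_{ij}\in A^0(U,(T^cM)^*)$ with $\rho(\theta_{ij})=\eta_{ij}$. Define $\nabla_U$ on the frame by $\nabla_U(e_i)=\sum_j\theta_{ij}\otimes e_j$ and extend to arbitrary sections by the Leibniz rule $\nabla_U(\sum f_i e_i)=\sum df_i\otimes e_i+\sum f_i\nabla_U(e_i)$. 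Then $\nabla_U$ is a connection on $E|_U$, and because $\rho(df_i)=\rho(df_i)$ and $\rho(\theta_{ij})=\eta_{ij}$, one checks directly that $(\rho\otimes\mathrm{Id})\circ\nabla_U=\delta|_U$.

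Second, I would globalize. Pick an open cover $\{U_\alpha\}$ of $M$ on which the local extensions $\nabla_\alpha$ of $(H|_{U_\alpha},\delta|_{U_\alpha})$ exist, together with a smooth partition of unity $\{\varphi_\alpha\}$ subordinate to this cover. Define
\[
\nabla \;=\; \sum_\alpha \varphi_\alpha\, \nabla_\alpha .
\]
Since $\sum_\alpha\varphi_\alpha\equiv 1$, the Leibniz rule is preserved:
\[
\nabla(fs)=\sum_\alpha\varphi_\alpha\bigl(df\otimes s+f\nabla_\alpha s\bigr)
=df\otimes s+f\nabla s,
\]
so $\nabla$ is a genuine connection on $E$. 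Applying $\rho\otimes\mathrm{Id}$ commutes with the $A^0(M)$-linear combination, so
\[
(\rho\otimes\mathrm{Id})(\nabla s)=\sum_\alpha\varphi_\alpha\,(\rho\otimes\mathrm{Id})(\nabla_\alpha s)=\sum_\alpha\varphi_\alpha\,\delta(s)=\delta(s),
\]
proving that $\nabla$ extends $(H,\delta)$ in the sense of the previous definition.

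There is no serious obstacle in this argument; the only non-formal step is the local lift of the $1$-forms $\eta_{ij}$ through the surjection $\rho$, which exists because in the smooth category any short exact sequence of vector bundles splits locally (and even globally). Convexity of the space of extensions—which ensures that the partition-of-unity combination is again an extension—is what makes the global construction go through without any compatibility conditions on the chosen local $\nabla_\alpha$.
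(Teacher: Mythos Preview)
Your argument is correct and is the standard partition-of-unity proof of this fact. The paper itself does not supply a proof of this lemma; it is stated as a known result from the Chern--Weil/localization theory being reviewed (the reference is Suwa's book [\ref{Suwa}]). So there is no ``paper's proof'' to compare against beyond noting that your argument is exactly the one that appears in that reference.

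Two minor cosmetic points. First, you write ``holomorphic frame'', but in the generality of the lemma $E$ is only assumed to be a complex vector bundle, so a smooth local frame is what you want (and is all you use). Second, the clause ``because $\rho(df_i)=\rho(df_i)$'' is a tautology; presumably you meant to record that $(\rho\otimes\mathrm{Id})(df\otimes s)=\rho(df)\otimes s$, which is the term that matches the Leibniz part of $\delta$. Neither of these affects the validity of the proof.
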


An important class of partial connections comes from "action" of
involutive subbundles of the  tangent bundles of manifolds:

\begin{definition} Let $F \subset TM$ be an involutive subbundle of
$TM$. An action of $F$ on vector bundle $E$ is a
$\mathbb{C}$-bilinear map:
$$ \alpha : A^{0}(M,E) \times A^{0}(M,F) \longrightarrow
A^{0}(M,E)$$
 satisfying the following conditions for $f \in A^{0}(M), \ \ u, v
\in A^{0}(M,F) \ \ s \in A^{0}(M,E)$: \\\

\noi 1) $\alpha([u,v],s) = \alpha(u, \alpha(v,s)) -
\alpha(v,\alpha(u,s))$ ;   \\

\noi 2) $\alpha(f.u,s) = f.\alpha(u,s)$ ;  \\

\noi 3) $\alpha(u,f.s) = u(f).s + f\alpha(u,s)$ ;  \\

\noi 4) $\alpha(u,s)$ is holomorphic whenever $u$ and $s$ are. \\
\\
If $E$ admits an action of $F$ on says that $E $ is an $F$-bundle.

\end{definition}

\begin{lemma} Let $\alpha$ be an action of $F$ on $E$ and let
$$ \delta_{\alpha} : A^{0}(M,E) \longrightarrow A^{0}(M,F^{\ast} \otimes
E) \simeq A^{0}(M, Hom(F,E)) $$
be defined by $\delta_{\alpha}(s,u) = \alpha(u,s).$ Then the pair $(F,\delta_{\alpha})$ is a partial connection for $E$.

\end{lemma}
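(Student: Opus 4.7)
The plan is to verify directly that the map $\delta_\alpha$ satisfies the two defining conditions of a partial connection for $E$ with respect to the subbundle $F$, namely the well-definedness of its codomain, its $\mathbb{C}$-linearity, and the Leibniz-type identity. Since a partial connection takes values in $A^{0}(M, F^{*}\otimes E)\simeq A^{0}(M, \Hom(F,E))$, I would first check that for each fixed $s\in A^{0}(M,E)$ the assignment $u\mapsto \alpha(u,s)$ is $A^{0}(M)$-linear in $u$, so that it really defines a section of $\Hom(F,E)$. This is exactly axiom (2) of an action, $\alpha(f u,s)=f\,\alpha(u,s)$, combined with additivity from the $\mathbb{C}$-bilinearity of $\alpha$.

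Next, the $\mathbb{C}$-linearity of $\delta_\alpha$ in $s$ is immediate from the $\mathbb{C}$-bilinearity of $\alpha$. The remaining point is the Leibniz rule
$$\delta_\alpha(fs) \;=\; \rho(df)\otimes s + f\,\delta_\alpha(s), \qquad f\in A^{0}(M).$$
I would evaluate both sides at an arbitrary $u\in A^{0}(M,F)$. The right-hand side reads $\rho(df)(u)\,s + f\,\alpha(u,s)=u(f)\,s + f\,\alpha(u,s)$, using that $\rho\colon (T^{c}M)^{*}\to F^{*}$ is the restriction map, so $\rho(df)(u)=df(u)=u(f)$. The left-hand side is $\alpha(u,fs)$, which by axiom (3) equals $u(f)\,s+f\,\alpha(u,s)$. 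The two expressions match, which settles the identity.

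The main obstacle is essentially bookkeeping: making sure that axioms (2) and (3) of an action correspond exactly to the $A^{0}(M)$-tensoriality in $u$ (needed for the target $F^{*}\otimes E$) and to the derivation property in $f$ (required by the partial connection), and that the identification $\rho(df)(u)=u(f)$ is invoked correctly. No deeper structural argument is required; the lemma is a direct translation between the language of actions and that of partial connections. Note that axiom (1), the compatibility with the Lie bracket, plays no role here; it will enter only later when one studies the curvature or flatness of the induced partial connection.
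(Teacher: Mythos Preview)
Your proof is correct. The paper actually states this lemma without proof, treating it as a standard fact (implicitly deferring to Suwa's book [\ref{Suwa}]); your direct verification using axioms (2) and (3) of an action is precisely the natural argument one would supply, and your observation that axiom (1) is irrelevant here is also accurate.
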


\begin{definition} Let $\alpha$ be an action of $F$ on $E$. An
$F$-connection for $E$ is a connection which extends the partial connection $(F\oplus \overline{TM}, \delta_{\alpha} \oplus
\overline{\partial} )$.

\end{definition}

\subsection{Bott vanishing theorem for $2$-flags}

Now, we will use the Chern-Weil theory of characteristic classes, in
order to describe the Bott vanishing theorem for flags. This is a
holomorphic version of the vanishing theorem due to Cordero-Masa,
[\ref{CorMa}, Theorem 3.9, pg 71].

\begin{thm}\label{2.15} Let $\mathcal{F} = (\mathcal{F}_{1}, \mathcal{F}_{2} )$ be a 2-flag of holomorphic foliations on a compact complex manifold $M$ of dimension $n$ such that $\mathcal{F}_{1} =
\mathcal{O}(F_1)$ and $\mathcal{F}_{2} = \mathcal{O}(F_2)$ with $F_{1} \subset F_{2}
\subset TM$ involutive subbundles.
Let 
$E = ( E_{1} , E_{2})$ be a pair of  vector bundles on $M$ with $E_{1}$ an
$F_{1}$-bundle, $E_{2}$ an $F_{2}$-bundle.  Let $\varphi_{1}$ and $\varphi_{2}$
be homogeneous symmetric polynomials, of degrees $d_{1}$ and
$d_{2}$, such that at least one of the inequalities
\begin{equation}\label{eq. 1}
d_{1} > n - \mbox{rank} F_{1} \ \ \ or \ \ \ d_{2} > n - \mbox{rank} F_{2}
\ \ \ or \ \ \ d_{1} + d_{2}
> n - \mbox{rank} F_{1}
\end{equation}
\noi is satisfied. Then the form  $\varphi_{1}(E_{1}) \smile \varphi_{2}(E_{2}) =0$ vanishes. 

\end{thm}

\begin{proof} \noi Let us denote rank($F_{1}) = p_{1}$, rank$(F_{2}) = p_{2}$,
rank$(E_{1}) = r_{1}$ and rank$(E_{2}) = r_{2}$.
\\

Let $ \alpha_{i} : A^{\circ}(M, F_{i}) \times A^{\circ}(M, E_{i})
\rightarrow A^{\circ}(M, E_{i})$ be an action of $F_{i}$
 on  $E_{i}$, for $i = 1, 2.$  and let  $\bigtriangledown^{i}$ be an $F_{i}$-connection for
$E_{i}$. Let us consider  a coordinate neighborhood $ \{ U,
(z_{1},...,z_{n}) \}$ on   $M$ such that $F_{1}$ and $F_{2}$, can be
written (spanned) by:

\noi $F_{1} = <v_{1},...,v_{p_{1}}>$ \ \ \ and \ \ \ $F_{2} =
<v_{1},...,v_{p_{1}},...,v_{p_{2}}>$, where $\displaystyle v_{i} =
\frac{\partial}{\partial z_{i}}$. \\

We have holomorphic frames (see \cite{Suwa}), $S^{1} =
(s_{1}^{1},...,s_{r_{1}}^{1})$ of $E_{1}|_{U}$ and $S^{2} =
(s_{1}^{2},...,s_{r_{2}}^{2})$ of $E_{2}|_{U}$ such that \\

$\bullet \ \ \alpha_{1}(v_{i}, s_{\nu}^{1}) = 0,$ \ \ \ for \ \ \ $i
= 1,...,p_{1}$
and $\nu = 1,...,r_{1}.$ \\

$\bullet \ \ \alpha_{2}(v_{i}, s_{\nu}^{2}) = 0,$ \ \ \ for \ \ \ $i
= 1,...,p_{2}$
and $\nu = 1,...,r_{2}.$ \\

\noi Now let $\Theta^{1} = (\Theta^{1}_{\nu \mu})$ and $\Theta^{2} =
(\Theta^{2}_{\nu \mu})$ be connection matrices  of
$\nabla^{1}$ and $\nabla^{2}$ respectively, i.e \\

$\nabla^{1}(s_{\nu}^{1}) = \sum_{\mu = 1}^{r_{1}} \Theta^{1}_{\nu
\mu} s_{\mu}^{1}$, \hskip 2truecm 
$\nabla^{2}(s_{\nu}^{2}) = \sum_{\mu = 1}^{r_{2}} \Theta^{2}_{\nu
\mu} s_{\mu}^{2}$. \\

\noi We have \\

$\nabla^{1}(s_{\nu}^{1})(v_{i}) = \alpha_{1}(v_{i},s_{\nu}^{1}) = 0$, \hskip 2truecm 
$\nabla^{2}(s_{\nu}^{2})(v_{i}) = \alpha_{2}(v_{i},s_{\nu}^{2}) =
0.$
\\

Then, we have ${\displaystyle   i_{\frac{\partial}{\partial z_{i}}} \Theta^{1}_{\nu \mu} = 0, }$ \  for all \  $i = 1,...,p_{1}$ \  and \  $\nu, \mu =
1,...,r_{1}.$
This shows that each $\displaystyle \Theta^{1}_{\nu \mu}$  is of the form $\sum_{i = p_{1} + 1}^{n}
f_{i}^{\nu \mu} dz_{i}$ with $f_{i}^{\nu \mu} \in \mathcal{O}(U).$ Thus, the curvature matrix has the following property
\begin{center}
 $K^{1} = (K^{1}_{\nu \mu})$ \ \ \ with \ \ \ $K^{1}_{\nu \mu} =
\sum_{i = p_{1}+1}^{n} \eta_{i}^{\nu \mu} dz_{i}$ \ \ \ where \ \ \ $\eta_{i}^{\nu
\mu} \in \Omega^{1}(U)$.
\end{center}

\noi  Similarly $\Theta^{2}_{\nu \mu} = \sum_{i = p_{2} + 1}^{n}
g_{i}^{\nu \mu} dz_{i}$ \ \ and \ \ $K^{2}_{\nu \mu} =
\sum_{i = p_{2}+1}^{n} \omega_{i}^{\nu \mu} dz_{i}$.
Then $$%\varphi (E) = 
\varphi_{1}(E_{1}) \smallsmile
\varphi_{2}(E_{2}) = \varphi_{1}(K^{1}) \smallsmile
\varphi_{2}(K^{2}).
$$
Therefore, if either $d_{1} > n - p_{1}$ or $d_{2} > n - p_{2}$ or $d_{1} + d_{2} > n - p_{1}$ then 
%$\varphi (E) = 0$.
$\varphi_{1}(E_{1}) \smallsmile \varphi_{2}(E_{2})=0$.
\end{proof}

Now, since  we have on hand a vanishing theorem for holomorphic flags, we can try to compute characteristic classes associated to the flags. 

Let $\mathcal{F} = (\mathcal{F}_{1}, \mathcal{F}_{2} )$ be a 2-flag of holomorphic foliations on a compact complex manifold $M$. Away from the singular set $S(\mathcal{F})$ of  the flag, $\mathcal{F}_{1}$ and $\mathcal{F}_{2}$
are locally free sheaves, then there exist vector bundles $F_{1}^{0}$ and $F_{2}^{0}$ on 
$M^{0} :=M \setminus S(\mathcal{F})$ such that
$\mathcal{O}(F_{1}^{0}) = \mathcal{F}_{1} $ and $\mathcal{O}(F_{2}^{0}) = \mathcal{F}_{2}.$ We have that $F_{1}^{0}$ and $F_{2}^{0} $ are subbundles for $TM^0$. Also, let $\displaystyle N_{F_{2}^{0}} = TM^{0} / F_{2}^{0}$ and $\displaystyle N_{1 2} = F_{2}^{0} / F_{1}^{0}$, then denote $\mathcal{N}_{2}
:= \mathcal{O}(N_{F_{2}^{0}})$ and $\displaystyle \mathcal{N}_{1 2}
:= \mathcal{F}_{2} / \mathcal{F}_{1} = \mathcal{O}(N_{1 2}).$ Finally, we denote by $\mathcal{N}_{\mathcal{F}}$ the sum $\mathcal{N}_{\mathcal{F}} = \mathcal{N}_{1 2} \oplus \mathcal{N}_{2}$. 

Let us consider $\varphi := (\varphi_{1}, \varphi_{2} )$  where $\varphi_{1}$ and $\varphi_{2}$  are  homogeneous symmetric
polynomials of degree $d_{1}
$ and $d_{2}$ respectively, satisfying (\ref{eq. 1}).  We will show that the class $\varphi(\mathcal{N}_{\mathcal{F}}) = \varphi_{1}(\mathcal{N}_{12}) \smile  \varphi_{2}(\mathcal{N}_{2})$
 is localized at $S(\mathcal{F})$ and we will compute this class. 

\begin{thm}\label{2.16} Let $\mathcal{F} = (\mathcal{F}_{1}, \mathcal{F}_{2} )$ be a 2-flag of holomorphic foliations on a compact complex manifold
$M$ of dimension $n$. Let $\varphi_{1}, \varphi_{2}$ be homogeneous symmetric polynomials , of degree $d_{1}
$ and $d_{2}$, respectively,   satisfying (\ref{eq. 1}). Then for each compact connected component $S_{\lambda}$ of $S(\mathcal{F})$ there exists Res$_{\varphi_{1}, \varphi_{2}} (\mathcal{F} , \mathcal{N}_{\mathcal{F}}, S_{\lambda} ) \in H_{2n - 2(d_{1} + d_{2} )} (S_{\lambda}; \mathbb{C})$ such that, in $H_{2n - 2(d_{1} + d_{2} )} (M; \mathbb{C})$ we have
\begin{equation}\label{eq. 2}
\sum_{\lambda}(\iota_{\lambda})_{\ast} \mbox{Res}_{\varphi_{1}, \varphi_{2}} (\mathcal{F} , \mathcal{N}_{\mathcal{F}}, S_{\lambda} ) = [\varphi_{1}(\mathcal{N}_{12}) \smile  \varphi_{2}(\mathcal{N}_{2})] \frown [M],
\end{equation}
 where $\iota_{\lambda}$ denotes the embedding of $S_{\lambda}$ in $M$.

\end{thm}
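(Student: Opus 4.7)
The plan is to mimic the standard Baum–Bott localization argument of Lehmann–Suwa, but now applied simultaneously to the two normal sheaves $\mathcal{N}_{12}$ and $\mathcal{N}_2$, using the flag version of Bott vanishing (Theorem \ref{2.15}) to make the relevant form vanish off the singular set.

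First I would cover $M$ by two open sets: let $U_0$ be an open neighborhood of $S(\mathcal{F})$ and $U_1 := M\setminus S'$, where $S'\subset U_0$ is a slightly smaller closed neighborhood of $S(\mathcal{F})$, so that $\{U_0,U_1\}$ is an open cover of $M$ and $S(\mathcal{F})\subset U_0$, $S(\mathcal{F})\cap U_1=\emptyset$. On $U_1$ both $\mathcal{F}_1\subset\mathcal{F}_2$ are locally free and define regular subbundles $F_1\subset F_2\subset TU_1$; consequently $\mathcal{N}_{12}|_{U_1}$ is an $F_1$-bundle and $\mathcal{N}_2|_{U_1}$ is an $F_2$-bundle via the Bott partial connections induced by the Lie bracket. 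Using Lemma 1 (every partial connection extends), I pick an $F_1$-connection $\nabla_1^{(1)}$ on $\mathcal{N}_{12}|_{U_1}$ and an $F_2$-connection $\nabla_1^{(2)}$ on $\mathcal{N}_2|_{U_1}$. On $U_0$ I pick arbitrary smooth connections $\nabla_0^{(1)}$, $\nabla_0^{(2)}$ on the restrictions of $\mathcal{N}_{12}$ and $\mathcal{N}_2$.

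Next I would pass to the \v{C}ech–de Rham complex of the cover $\{U_0,U_1\}$ and use the Bott difference construction to assemble, out of the pair of connections on each overlap, a closed form $\varphi_1(\nabla^{(1)})\smile\varphi_2(\nabla^{(2)})$ in the total complex representing the cohomology class $\varphi_1(\mathcal{N}_{12})\smile\varphi_2(\mathcal{N}_2)\in H^{2(d_1+d_2)}(M;\mathbb{C})$. The crucial point is that, by exactly the computation in the proof of Theorem \ref{2.15}, the component of this cocycle on $U_1$ is
\[
\varphi_1(K_1^{(1)})\smile \varphi_2(K_1^{(2)})=0,
\]
because the hypothesis (\ref{eq. 1}) on the degrees forces a product of too many factors of $dz_i$ with $i>p_1$ (respectively $i>p_2$) to fit in $n$ variables. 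Hence the cocycle is supported on $U_0$ and therefore lifts to a class in the relative cohomology $H^{2(d_1+d_2)}(M,M\setminus S(\mathcal{F});\mathbb{C})$.

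I would then apply Alexander–Lefschetz duality
\[
H^{2(d_1+d_2)}(M,M\setminus S(\mathcal{F});\mathbb{C})\ \cong\ H_{2n-2(d_1+d_2)}(S(\mathcal{F});\mathbb{C})
\]
(valid because $M$ is compact and oriented) together with the decomposition $S(\mathcal{F})=\bigsqcup_\lambda S_\lambda$ into connected components, to define
\[
\mathrm{Res}_{\varphi_1,\varphi_2}(\mathcal{F},\mathcal{N}_{\mathcal{F}},S_\lambda)\in H_{2n-2(d_1+d_2)}(S_\lambda;\mathbb{C})
\]
as the component of this relative class on $S_\lambda$, i.e.\ the local contribution obtained by integration of $\varphi_1(\nabla_0^{(1)})\smile\varphi_2(\nabla_0^{(2)})$ minus the transgression differences over a small neighborhood of $S_\lambda$. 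The naturality of the relative-to-absolute map $H^{2(d_1+d_2)}(M,M\setminus S(\mathcal{F}))\to H^{2(d_1+d_2)}(M)$ together with Poincar\'e duality then gives exactly formula (\ref{eq. 2}).

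The main obstacle is checking that the residue at $S_\lambda$ is well defined, i.e.\ independent of all the choices made: the neighborhoods $U_0, U_1$, the extending connections $\nabla_1^{(1)}, \nabla_1^{(2)}, \nabla_0^{(1)}, \nabla_0^{(2)}$, and the Bott transgression (Chern–Simons) forms used to assemble the \v{C}ech–de Rham cocycle. This is handled by a standard homotopy argument: two different choices are linked by a path of pairs of connections, and the corresponding difference is an exact term in the \v{C}ech–de Rham complex supported on $U_0$; care is needed because we are taking a cup product of characteristic forms for two distinct bundles, so the homotopy has to be carried out separately in each factor and then combined via the Leibniz rule for the cup product, using the vanishing on $U_1$ at each stage to keep the correction terms localized. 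Everything else (functoriality, the statement of Alexander duality, reduction to connected components) is routine once this independence is established.
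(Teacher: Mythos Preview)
Your overall strategy is the right one and closely parallels the paper's, but there is a genuine gap at the step where you write ``On $U_0$ I pick arbitrary smooth connections $\nabla_0^{(1)}$, $\nabla_0^{(2)}$ on the restrictions of $\mathcal{N}_{12}$ and $\mathcal{N}_2$.'' The sheaves $\mathcal{N}_{12}=\mathcal{F}_2/\mathcal{F}_1$ and $\mathcal{N}_2=\Theta_M/\mathcal{F}_2$ are coherent but in general \emph{not locally free} on any open set meeting $S(\mathcal{F})$; indeed $S(\mathcal{F})$ is by definition the locus where these quotients fail to be locally free. A connection in the Chern--Weil sense requires an underlying $C^\infty$ vector bundle, so there is no ``arbitrary smooth connection'' to pick on your $U_0$, and hence no way to build the \v{C}ech--de~Rham cocycle as you describe. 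For the same reason, your argument does not even explain what the global class $\varphi_1(\mathcal{N}_{12})\smile\varphi_2(\mathcal{N}_2)\in H^{2(d_1+d_2)}(M;\mathbb{C})$ means.

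The paper supplies exactly this missing ingredient: on a neighborhood $U$ of each component $S_\lambda$ one takes real-analytic vector bundle resolutions
\[
0\longrightarrow A_U(E_q^{12})\longrightarrow\cdots\longrightarrow A_U(E_0^{12})\longrightarrow A_U\otimes\mathcal{N}_{12}\longrightarrow 0
\]
(and similarly for $\mathcal{N}_2$), and defines $\varphi_1(\mathcal{N}_{12}):=\varphi_1(\xi^{12})$ for the virtual bundle $\xi^{12}=\sum_i(-1)^iE_i^{12}$. One then equips each $E_i^{12}$ with a connection: over the regular open set $U\setminus S_\lambda$ these are chosen \emph{compatible} with the resolution and with the Bott connection on the honest bundle $N_{12}$, which forces $\varphi_1({}^{12}\nabla_0^\bullet)=\varphi_1(\nabla_{12})$; over all of $U$ they are arbitrary. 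With this device in place the remainder of your outline---vanishing on the regular part by Theorem~\ref{2.15}, lifting to relative cohomology, and applying the Alexander isomorphism---goes through as you sketch. So the missing idea is precisely the passage to virtual bundles via locally free resolutions, which is what makes Chern--Weil theory available across the singular locus.
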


\begin{proof}  With the previous notations, the exact sequences
$$ 0 \longrightarrow \mathcal{F}_{2} \longrightarrow \Theta_{M} \longrightarrow
\mathcal{N}_{2} \longrightarrow 0 $$
$$ 0 \longrightarrow \mathcal{F}_{1} \longrightarrow \mathcal{F}_{2} \longrightarrow
\mathcal{N}_{1 2} \longrightarrow 0 $$

\noi induce, respectively, actions $\alpha_{2}$ of $F_{2}^{0}$ on $N_{F_{2}^{0}}$ and $\alpha_{1}$ of $F_{1}^{0}$ on $N_{12}$, [\ref{BB}, \ref{Suwa}].

Let us denote by $\nabla_{12}$ the $F_{1}^{0}$-connection for
$N_{12}$ and $\nabla_{2}$ the $F_{2}^{0}$-connection for
$N_{F_{2}^{0}}.$ Let $S$ be a compact connected component of
$S(\mathcal{F})$ and $U$ a relatively compact open neighborhood of
$S$  in  $M$ disjoint from the other components of $S(\mathcal{F})$.
We set $U_{0} = U \setminus S$ and $U_{1} = U$ and consider the
covering  $\mathcal{U} = \{U_{0}, U_{1} \}$ of $U$. We take
resolutions of the normal sheaves $\mathcal{N}_{12}$ and
$\mathcal{N}_{2}$ by real analytic vector bundles $E_{i}^{12}$ and
$E_{j}^{2}$ on $U$:
$$0 \longrightarrow A_{U}(E_{q}^{1 2}) \longrightarrow \cdots  \longrightarrow A_{U}(E_{0}^{1 2})
\longrightarrow A_{U} \otimes \mathcal{N}_{1 2} \longrightarrow 0 $$
$$0 \longrightarrow A_{U}(E_{r}^{2}) \longrightarrow \cdots  \longrightarrow A_{U}(E_{0}^{2})
\longrightarrow A_{U} \otimes \mathcal{N}_{2} \longrightarrow 0. $$

Since the characteristic class $\varphi_{1}(\mathcal{N}_{12})$ is the characteristic class $\varphi_{1}(\xi^{12})$ of the virtual bundle $\xi^{12} = \sum_{i = 0}^{q} (-1)^{i}E_{i}^{12}$ and $\varphi_{2}(\mathcal{N}_{2}) = \varphi_{2}(\xi^{2}) $ for $\xi^{2} = \sum_{i = 0}^{r} (-1)^{i}E_{i}^{2}$, we define the characteristic class $\varphi(\mathcal{N}_{\mathcal{F}})$, of the  normal sheaf of the flag by $ \varphi_{1}(\mathcal{N}_{12}) \smile \varphi_{2}(\mathcal{N}_{2}).$ Away the from singular component, i.e., $U_{0}$, we have the exact sequences of vector bundles
\begin{equation}\label{eq. 3}
0 \longrightarrow E_{q}^{1 2} \longrightarrow \cdots  \longrightarrow E_{0}^{1 2}
\longrightarrow \mathcal{N}_{1 2} \longrightarrow 0
\end{equation}
\begin{equation}\label{eq. 4}
0 \longrightarrow E_{r}^{2} \longrightarrow \cdots  \longrightarrow E_{0}^{2}
\longrightarrow \mathcal{N}_{2} \longrightarrow 0.
\end{equation}

There exist connections $^{12}\nabla_{0}^{i}$ on $U_{0}$ for each $E_{i}^{12}$   such that the family of connections $$(^{12}\nabla_{0}^{q}, ..., ^{12}\nabla_{0}^{0}, \nabla_{1})$$ is compatible with (\ref{eq. 3}), see [\ref{BB}]. Analogously, there exist connections $^{2}\nabla_{0}^{i}$ on $M$ for each $E_{i}^{2}$ with the same property. We denote $(^{1 2}\nabla_{0}^{q},
\dots , ^{1 2}\nabla_{0}^{0})$ by $^{12}\nabla_{0}^{\bullet}$  and $(^{2}\nabla_{0}^{r}, \dots , ^{2}\nabla_{0}^{0}) $  by  $^{2}\nabla_{0}^{\bullet}$. Then ([\ref{Suwa}, Proposition 8.4]) we have 
\begin{equation} \label{eq.1.4}
\varphi_{1}(^{1 2}\nabla_{0}^{\bullet}) = \varphi_{1}(\nabla^{1})
\ \ \mbox{and} \ \  \varphi_{2}(^{2}\nabla_{0}^{\bullet}) =
\varphi_{2}(\nabla^{2}).
\end{equation}

Let us consider  an arbitrary family $^{1 2}\nabla_{1}^{\bullet} = (^{1 2}\nabla_{1}^{q}, \dots, ^{1
2}\nabla_{1}^{0}) $    of connections  on $U_{1}$, where each  $^{1 2}\nabla_{1}^{(i)}$ is a connection for  $E^{1 2}_{i}$.
Similarly,  we consider an  arbitrary family $^{2}\nabla_{1}^{\bullet} = (^{2}\nabla_{1}^{r}, \dots , ^{2}\nabla_{1}^{0}). $
Then the class $\varphi(\mathcal{N}_{\mathcal{F}}) =
\varphi_{1}(\mathcal{N}_{1 2}) \smallsmile
\varphi_{2}(\mathcal{N}_{2}) = \varphi_{1}(\xi^{1 2}) \smallsmile
\varphi_{2}(\xi^{2})$ in $H^{2(d_{1} + d_{2})}(U; \mathbb{C})$ is represented in  $A^{2(d_{1} +
d_{2})}(U)$  by the cocycle \\

\noi $\varphi(_{2}^{12}\nabla_{\ast}^{\bullet}) = (\varphi_{1}(^{12}\nabla_{0}^{\bullet}),
\varphi_{1}(^{12}\nabla_{1}^{\bullet})  ,
\varphi_{1}(^{12}\nabla_{0}^{\bullet}, ^{12}\nabla_{1}^{\bullet}) )
\smallsmile (\varphi_{2}(^{2}\nabla_{0}^{\bullet}),
\varphi_{2}(^{2}\nabla_{1}^{\bullet})  ,
\varphi_{2}(^{2}\nabla_{0}^{\bullet}, ^{2}\nabla_{1}^{\bullet}) ) 
$ \\

\noi $= (\varphi_{1}(^{12}\nabla_{0}^{\bullet}) \wedge
\varphi_{2}(^{2}\nabla_{0}^{\bullet}),
\varphi_{1}(^{12}\nabla_{1}^{\bullet}) \wedge
\varphi_{2}(^{2}\nabla_{1}^{\bullet})  ,
\varphi_{1}(^{12}\nabla_{0}^{\bullet}) \wedge
\varphi_{2}(^{2}\nabla_{0}^{\bullet}, ^{2}\nabla_{1}^{\bullet}) + \\ \\
+\varphi_{1}(^{12}\nabla_{0}^{\bullet}, ^{12}\nabla_{1}^{\bullet})
\wedge \varphi_{2}(^{2}\nabla_{1}^{\bullet})).$ \\

Then, by the  Theorem \ref{2.15}  we have  $
\varphi(_{2}^{12}\nabla_{\ast}^{\bullet}) \in A^{2(d_{1} +
d_{2})}(U, U_{0})$. Let us consider  $[\varphi(_{2}^{12}\nabla_{\ast}^{\bullet})] =
\varphi_{S}(\mathcal{N}_{\mathcal{F}}, \mathcal{F}) \in H^{2(d_{1} +
d_{2})}(U, U\backslash S ; \mathbb{C})$, we have the class
$$
Res_{\varphi_{1}, \varphi_{2}}
(\mathcal{N}_{\mathcal{F}}, \mathcal{F}; S) = Al(\varphi_{S}(\mathcal{N}_{\mathcal{F}}, \mathcal{F})) \in H_{2n -
2(d_{1} + d_{2})} (S ; \mathbb{C}), $$ where $Al:H^{2(d_{1} +
d_{2})}(U, U\backslash S ; \mathbb{C}) \to  H_{2n -
2(d_{1} + d_{2})} (S ; \mathbb{C})$ is the Alexander   isomorphism (see \cite{Bra}).

\end{proof}

\begin{definition} The class Res$_{\varphi_{1}, \varphi_{2}}
(\mathcal{N}_{\mathcal{F}}, \mathcal{F}; S)$ is called  the Baum-Bott residue for the flag $\mathcal{F}$.

\end{definition}

In the following simple example we  calculate the residues for some symmetric polynomials.
\begin{example}\label{Pn} Consider on $\mathbb{P}^{n}$  the flag $(\F_1,\F_2)$ of foliations, with $\F_1$ induced by the vector field   $\displaystyle X = \partial/\partial z_{3}$ and $\F_2$ induced by the 1-form $\omega = z_{0}dz_{1} - z_{1}dz_{0}$. The singular set of $\F_1$ is given by $S(\mathcal{F}_{1})=\{  [0:0:0:1:0:...:0] \} $ and the singular set of $\F_2$ is $S(\mathcal{F}_{2}) =   S = \{ z_{0} = z_{1} = 0 \} .$
In particular, $ S(\mathcal{F}_{1}) \subset S(\mathcal{F}_{2})$.

 Now,  we calculate the residue for this  flag. Let us denote by $h$ the class $c_1(\mathcal{O}_{\mathbb{P}^{n}}(1))$.
On the one hand  $\mathcal{F}_{1} = \mathcal{O}_{\mathbb{P}^{n}}(1)$ and $\F_2=\mathcal{O}_{\mathbb{P}^{n}}(1)^{\oplus(n-1)}$ we conclude  that
$$c_{1} (\mathcal{N}_{12}) = c_{1}(\mathcal{F}_{2}) - c_{1}(\mathcal{F}_{1}) = (n - 1)h - 1h = (n - 2)h.$$
On the other hand, we have
$c_{1}(\mathcal{N}_{2}) = c_1(T\mathbb{P}^{n})-c_1(\F_2) = 2 h$, since $c_1(T\mathbb{P}^{n})=(n+1)h$.
It follows from the  Theorem \ref{2.16} that, for each $ j = 0,..., n-1$, one has 
$$ \mbox{Res}_{c_{1}^{n - 1 - j}c_{1}^{1 + j} }(\mathcal{F}, \mathcal{N}_{\mathcal{F}}; S ) =  \int_{\mathbb{P}^{n}}  c_{1}^{n - 1 - j}(\mathcal{N}_{12}) c_{1}^{1 + j}(\mathcal{N}_{2}) =(n - 2)^{n - 1 - j}2^{1 + j}.
$$
\end{example}

\begin{rmk}\label{2.17} Note that the Theorem \ref{2.15} is legitime of flags and  "finer"  than  Bott vanishing Theorem, see condition (\ref{eq. 1}). Observe that, with this theorem we can compute the classes:
$$ \varphi (\mathcal{N}_{\mathcal{F}}) = \varphi_{1} (\mathcal{N}_{1,2})\smile \varphi_{1} (\mathcal{N}_{2})$$

\noi with $d_{i} \leq \mbox{codim}( \mathcal{F}_{i})$ for $i = 1,2$ but with $d_{1} + d_{2} > \mbox{codim} (\mathcal{F}_{1})$. An important fact here is that for those polynomials it is not possible to apply the Bott vanishing Theorem. Then in this case, the residue Res$_{\varphi_{1}, \varphi_{2}} (\mathcal{F}, \mathcal{N}_{\mathcal{F}}, S)$ is really   a residue for  flags.

\end{rmk}

Now, we study  a refinement of Theorem \ref{2.16}.

\begin{thm}\label{thm 2.21} With the previous notations, if $d_{1} > \mbox{codim} (\mathcal{F}_{1})$ and $d_{2} > \mbox{codim} (\mathcal{F}_{2})$, then the characteristic class $\varphi(\mathcal{N}_{\mathcal{F}}) = \varphi_{1}(\mathcal{N}_{12})\smile \varphi_{2}(\mathcal{N}_{2})$ is localized at intersection $S := S(\mathcal{F}_{1}) \cap S(\mathcal{F}_{2}) $.

\end{thm}

\begin{proof} We use notations of the proof of Theorem \ref{2.16}. 
Let us denote  by $U_{1}$ a neighborhood of $S(\mathcal{F})$ and  $U_{0} := U_{1} \backslash S := U_{0}^{1} \cup U_{0}^{2}$,  
where $U_{0}^{1} := U_{1} \setminus   S(\mathcal{F}_{2}) $   and $U_{0}^{2}:= U_{1} \setminus   S(\mathcal{F}_{1}) $. 

It is enough to prove the vanishing $\varphi_{1}(^{12}\nabla_{0}^{\bullet}) \wedge
\varphi_{2}(^{2}\nabla_{0}^{\bullet}) = \varphi(_{2}^{12}\nabla_{\ast}^{\bullet})|_{U_{0}} = 0.$
In fact, since   $U_{0} := U_{0}^{1} \cup U_{0}^{2},$  we have
$$\varphi_{2}(^{2}\nabla_{0}^{\bullet})|_{U_{0}^{1}}  =   \varphi_{1}(^{12}\nabla_{0}^{\bullet})|_{U_{0}^{2}} =
\varphi_{2}(^{2}\nabla_{0}^{\bullet}) = \varphi_{1}(^{12}\nabla_{0}^{\bullet}) |_{U_{0}^{1} \cap U_{0}^{2}  }  = 0$$ by Bott vanishing theorem. The remainder of the proof is as Theorem \ref{2.16}.
\end{proof}

Let $M$ be a projective manifold and $H$ an ample divisor on $M$. A torsion free sheaf $\F$ is  semi-stable, with respect to $H$ , if
$$
 \frac{ c_1(\F) \cdot H^{n-1} }{rank(\F)}  \geq  \frac{ c_1(\G) \cdot H^{n-1} }{rank(\G)}
$$
for all subsheaf $\G \subset \F$. In the following corollary we show the relation betwing the semi-stability of the foliation and the positivity of  certain residues. 
\begin{corollary} Let $\mathcal{F}$ be a  holomorphic foliation of dimension two on a   projective manifold with  Picard group  $Pic(X)=\mathbb{Z} \cdot H$, for some  $H$ ample divisor. If $\F$ is semi-stable, then 
$$
Res_{c_{1}^{n}}(\mathcal{F}, (\mathcal{F}/ \mathcal{F}_1) ; S_\lambda)\geq 0
$$
% for some $S\subset S( \mathcal{F}_{1}) \cup S(\mathcal{F}_{2})$ 
% and for all subfoliation $\F_1\subset \F$.
for any subfoliation $\F_1\subset \F$ and any $S_\lambda \subset S( \mathcal{F}_{1}) \cup S(\mathcal{F})$.
\end{corollary}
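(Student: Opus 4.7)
The plan is to apply the flag Baum--Bott Theorem~\ref{2.16} to the flag $(\mathcal{F}_1, \mathcal{F})$ with the pair of symmetric polynomials $\varphi_1 = c_1^n$ (of degree $n$) acting on $\mathcal{N}_{12} = \mathcal{F}/\mathcal{F}_1$ and $\varphi_2 = 1$ acting trivially on $\mathcal{N}_2$. Because $\mathrm{rank}(\mathcal{F}_1)=1$, we have $d_1 = n > n-1 = n-\mathrm{rank}(\mathcal{F}_1)$, so the hypothesis \eqref{eq. 1} is satisfied and the flag residues are well defined.

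Theorem~\ref{2.16} then yields the global identity
$$\sum_{\lambda}(\iota_{\lambda})_\ast\,\mathrm{Res}_{c_1^n}(\mathcal{F},\mathcal{F}/\mathcal{F}_1;S_\lambda) \;=\; \bigl[c_1^n(\mathcal{F}/\mathcal{F}_1)\bigr]\frown[M],$$
where $S_\lambda$ runs over the connected components of $S(\mathcal{F}_1)\cup S(\mathcal{F}_2)$. Since $\mathrm{Pic}(X)=\mathbb{Z}\cdot H$, we can write $c_1(\mathcal{F}/\mathcal{F}_1) = m H$ for some $m\in\mathbb{Z}$; the right-hand side is thus $m^n H^n$, and ampleness of $H$ gives $H^n>0$.

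The next step is to control the sign of the integer $m$. Writing $c_1(\mathcal{F})=aH$ and $c_1(\mathcal{F}_1)=bH$, the semi-stability inequality $\mu(\mathcal{F}_1)\leq\mu(\mathcal{F})$ becomes $bH^n\leq\frac{a}{2}H^n$, so $m = a-b \geq a/2$. In particular $m^n H^n\geq 0$ whenever $a\geq 0$, and automatically for even $n$. Once the total residue is nonnegative, a finite sum of real numbers that is nonnegative must contain at least one nonnegative summand; this produces the component $S\subset S(\mathcal{F}_1)\cup S(\mathcal{F}_2)$ with the desired sign.

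The main obstacle is the sign control in the odd-$n$ case: the semi-stability bound $m\geq a/2$ by itself guarantees $m\geq 0$ only if $c_1(\mathcal{F})\cdot H^{n-1}\geq 0$. I would close this gap by combining the Hodge index theorem with the Bogomolov--Gieseker inequality for the semi-stable rank-two sheaf $\mathcal{F}$: in the cyclic-Picard setting these bounds constrain $(b-m)^2H^n$ in terms of the degrees of the codimension-two torsion supports arising from the non-saturation of $\mathcal{F}_1$ in $\mathcal{F}$, and hence pin down $m$ closely enough to $a/2$ to fix the sign of $m^n H^n$ in the remaining cases.
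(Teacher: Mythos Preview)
Your setup is exactly the paper's: apply Theorem~\ref{2.16} to the flag $(\mathcal F_1,\mathcal F)$ with $\varphi_1=c_1^{\,n}$ and $\varphi_2=1$, write $c_1(\mathcal F)=aH$, $c_1(\mathcal F_1)=bH$, and reduce everything to the sign of $(a-b)^nH^n$. The paper does not invoke anything beyond this. It simply reads off from semi-stability the chain
\[
aH^n \;=\; c_1(\mathcal F)\cdot H^{n-1}\;\ge\;\frac{c_1(\mathcal F)\cdot H^{n-1}}{\operatorname{rank}\mathcal F}\;\ge\; c_1(\mathcal F_1)\cdot H^{n-1}\;=\;bH^n,
\]
declares $a-b\ge 0$, and concludes $(a-b)^nH^n\ge 0$, so some residue in the finite sum is nonnegative. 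That is the entire argument.

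Where you diverge is in worrying about the legitimacy of the first inequality above when $a<0$, and then proposing to repair it with Bogomolov--Gieseker and the Hodge index theorem. Two comments. First, the paper does not take this detour; it treats $a-b\ge 0$ as an immediate consequence of semi-stability, so relative to the paper your extra machinery is not part of the intended proof. Second, and more importantly, your proposed patch is not convincing as written: the Bogomolov--Gieseker inequality for a rank-two semi-stable sheaf controls $c_2(\mathcal F)\cdot H^{n-2}$ in terms of $c_1(\mathcal F)^2\cdot H^{n-2}$, and the Hodge index theorem compares intersection numbers of divisor classes; neither of these, nor the torsion of $\mathcal F/\mathcal F_1$, gives you information about the \emph{sign} of $a$ or of $a-b$ on a manifold with cyclic Picard group. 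So the last paragraph of your plan does not actually close the gap you identified.

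In short: your argument coincides with the paper's up to the point where the sign of $a-b$ must be settled; the paper disposes of that step in one line (taking the displayed chain at face value), while your alternative route through Bogomolov--Gieseker is both unnecessary for matching the paper and, as sketched, does not work.
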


\begin{proof} 
Let us denote by $c_{1}(\F)=aH$ and $c_{1}(\F_1)=bH$, for a  subfoliation $\F_1$.
Since $\F$ is semi-stable, then 
$$
aH^{n}=c_1(\F) \cdot H^{n-1}\geq  
 \frac{ c_1(\F) \cdot H^{n-1} }{n-1} =   \left(\frac{a}{n-1}\right) H^{n} \geq c_1(\F_1) \cdot H^{n-1}=bH^{n} .
$$
This implies that $0\leq(a-b)$. It follows from   Theorem \ref{2.16}  that 
$$
 0\leq (a-b)^nH^n=c_{1}^{n}(\mathcal{N}_{12})= \sum_{S_\lambda \subset  S(\mathcal{F}_{1}) \cup S(\mathcal{F}_{2})  } Res_{c_{1}^{n}}(\mathcal{F}, (\mathcal{F}/ \mathcal{F}_1); S_\lambda),
$$
Since $c_1(\mathcal{F}/ \mathcal{F}_1)= (a-b)H$. Then, $Res_{c_{1}^{n}}(\mathcal{F}, (\mathcal{F}/ \mathcal{F}_1); S_\lambda)\geq 0$, for any $S_\lambda\subset S(\mathcal{F}_{1}) \cup S(\mathcal{F}). $
\end{proof}

\begin{example}
In the  Example \ref{Pn} we have the flag  given by 
$$\F_1=\mathcal{O}_{\mathbb{P}^{3}}(1)\subset  \F:=\mathcal{O}_{\mathbb{P}^{3}}(1)^{\oplus(2)}\subset T\mathbb{P}^{3}$$ 
and $S=\{z_0=z_1=0\}= Sing(\mathcal{F}, \mathcal{F}_1)$. 
The foliation $\F=\mathcal{O}_{\mathbb{P}^{3}}(1)^{\oplus(2)}$ is semi-stable and 
$$ \mbox{Res}_{c_{1}^{3} }(\mathcal{F}, (\mathcal{F}/ \mathcal{F}_1); S ) =2^{3}>0
$$
for the subfoliation $\F_1.$
\end{example}

\section{Determination and Comparison of certain   residues}

Let $\mathcal{F}$ be a holomorphic foliation on $M$. Although little is known about a formula to express residues in general cases, Baum and Bott in [\ref{BB}] show how to calculate these  residues in specific cases when the degree of the  polynomial $\varphi$ is codim$(\mathcal{F}) + 1$ and additional hypothesis on the singular set. Brunella and Perrone in [\ref{BruPer}] using integration current show a formula for residues for codimension one foliations. Recently Corr\^ea and Fernandez in [\ref{CoFer}] generalize this result  for higher codimensional holomorphic foliations. Here using these  tools and the Godbillon-Vey classes for flags of  Dominguez  [\ref{Domin}], we present a formula for residue of flags. For a basic reference, see [\ref{CoFer}, \ref{Mol}, \ref{Domin}, \ref{BruPer}].

\begin{proposition}\label{prop. 2.3.2} Given a 2-flag $\mathcal{F} = (\mathcal{F}_{1}, \mathcal{F}_{2})$ on a complex manifold $M$. On $M_{0} := M \backslash S(\mathcal{F}_{2}) $ we have Sing$(\mathcal{N}_{1}) \cap M_{0} = \mbox{Sing}(\mathcal{N}_{12}) \cap M_{0}$.
\end{proposition}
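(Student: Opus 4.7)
\medskip
\noindent\textbf{Proof plan.} The strategy is to read off the key short exact sequence of normal sheaves from the Feigin--Cordero--Masa diagram displayed earlier, and then use locally freeness of $\mathcal{N}_2$ on $M_0$ to conclude that the stalk sequence splits, forcing the singular loci of $\mathcal{N}_1$ and $\mathcal{N}_{12}$ to coincide on $M_0$.

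\medskip
\noindent\textbf{Step 1: Extract the exact sequence.} From the diagram of sheaves in Section~1, applying the snake lemma (or reading the middle row of the diagram directly) to the two exact sequences
$$
0 \longrightarrow \mathcal{F}_{1} \longrightarrow \mathcal{F}_{2} \longrightarrow \mathcal{N}_{12} \longrightarrow 0, \qquad 0 \longrightarrow \mathcal{F}_{2} \longrightarrow \Theta_{M} \longrightarrow \mathcal{N}_{2} \longrightarrow 0,
$$
together with $0\to\mathcal{F}_1\to\Theta_M\to\mathcal{N}_1\to 0$, yields the short exact sequence of normal sheaves
$$
0 \longrightarrow \mathcal{N}_{12} \longrightarrow \mathcal{N}_{1} \longrightarrow \mathcal{N}_{2} \longrightarrow 0.
$$
This is the main structural input of the proof.

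\medskip
\noindent\textbf{Step 2: Local freeness of $\mathcal{N}_2$ on $M_0$.} By definition $S(\mathcal{F}_2)=\mathrm{Sing}(\mathcal{N}_2)$, so $\mathcal{N}_2$ is locally free on $M_0=M\setminus S(\mathcal{F}_2)$. Consequently, for every $x\in M_0$ the stalk $(\mathcal{N}_2)_x$ is a free $\mathcal{O}_{M,x}$-module, and hence the induced stalk sequence
$$
0 \longrightarrow (\mathcal{N}_{12})_x \longrightarrow (\mathcal{N}_1)_x \longrightarrow (\mathcal{N}_2)_x \longrightarrow 0
$$
splits, giving $(\mathcal{N}_1)_x \cong (\mathcal{N}_{12})_x \oplus (\mathcal{N}_2)_x$.

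\medskip
\noindent\textbf{Step 3: Conclude the equality of singular loci.} Since $(\mathcal{N}_2)_x$ is free, the stalk $(\mathcal{N}_1)_x$ is free if and only if $(\mathcal{N}_{12})_x$ is free. Translating this back to sheaves, a point $x\in M_0$ belongs to $\mathrm{Sing}(\mathcal{N}_1)$ precisely when it belongs to $\mathrm{Sing}(\mathcal{N}_{12})$, i.e.
$$
\mathrm{Sing}(\mathcal{N}_1)\cap M_0 \;=\; \mathrm{Sing}(\mathcal{N}_{12})\cap M_0.
$$
No step is genuinely difficult; the only point requiring a bit of care is justifying the stalkwise splitting, which is immediate from $\mathrm{Ext}^1_{\mathcal{O}_{M,x}}((\mathcal{N}_2)_x,-)=0$ for free $(\mathcal{N}_2)_x$.
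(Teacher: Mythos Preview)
Your proposal is correct and follows essentially the same approach as the paper: both use the short exact sequence $0\to\mathcal{N}_{12}\to\mathcal{N}_1\to\mathcal{N}_2\to 0$, pass to stalks at a point of $M_0$, and split the sequence using freeness of $(\mathcal{N}_2)_x$ to conclude that $(\mathcal{N}_1)_x$ is free if and only if $(\mathcal{N}_{12})_x$ is free. The only cosmetic differences are that the paper cites the splitting lemma (together with the fact that over a local ring projective equals free) where you invoke $\mathrm{Ext}^1$-vanishing, and you spell out the derivation of the exact sequence slightly more than the paper does.
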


\begin{proof} We recall the exact sequence

\begin{equation}\label{eq. 6}
0 \longrightarrow \mathcal{N}_{1,2} \longrightarrow \mathcal{N}_{1} \longrightarrow \mathcal{N}_{2} \longrightarrow 0.
\end{equation}

Taking $p \in M \setminus S(\mathcal{F}_{2})$  the stalk  $\mathcal{N}_{2,p}$ is $\mathcal{O}_{M,p}$ - free. The sequence (\ref{eq. 6}) induces the exact sequence of $\mathcal{O}_{M,p}$-modules
\begin{equation}\label{eq. 7}
0 \longrightarrow \mathcal{N}_{1,2,p} \longrightarrow \mathcal{N}_{1,p} \longrightarrow \mathcal{N}_{2,p} \longrightarrow 0.
\end{equation}

Since $\mathcal{N}_{2,p}$ is a free module then by the  splitting lemma see [\ref{Hat}, p. 147], the sequence (\ref{eq. 7}) splits (here $\mathcal{O}_{M,p}$ is a local ring, then projective and free modules are equivalents):
$$
\mathcal{N}_{1,p} = \mathcal{N}_{1,2,p} \oplus \mathcal{N}_{2,p}. $$
Then,  the module $\mathcal{N}_{1,p}$ is free, if and only if, $\mathcal{N}_{1,2,p}$ is free.

\end{proof}

The proposition implies the following corollary: 

\begin{corollary} If the sheaf $\displaystyle \mathcal{N}_{12} = \mathcal{F}_{2}/\mathcal{F}_{1}$ is locally-free,  then  $S(\mathcal{F}_{1}) \subset S(\mathcal{F}_{2}). $

\end{corollary}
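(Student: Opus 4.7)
The plan is to derive this corollary as an immediate consequence of the preceding Proposition. By definition $S(\mathcal{F}_i) = \mathrm{Sing}(\mathcal{N}_i)$ is the locus where the normal sheaf $\mathcal{N}_i = \Theta_M/\mathcal{F}_i$ fails to be locally free. So what needs to be shown is that every point where $\mathcal{N}_1$ is not locally free already belongs to $S(\mathcal{F}_2)$; I would phrase this contrapositively and check that any $p\in M_0 := M\setminus S(\mathcal{F}_2)$ satisfies $p\notin S(\mathcal{F}_1)$.

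The key step is simply to invoke Proposition~\ref{prop. 2.3.2}, which asserts the equality
\[
\mathrm{Sing}(\mathcal{N}_1)\cap M_0 \;=\; \mathrm{Sing}(\mathcal{N}_{12})\cap M_0.
\]
Under the hypothesis that $\mathcal{N}_{12}$ is locally free on $M$, the right-hand side is empty, so $\mathrm{Sing}(\mathcal{N}_1)\cap M_0=\emptyset$, which gives $S(\mathcal{F}_1)=\mathrm{Sing}(\mathcal{N}_1)\subset M\setminus M_0 = S(\mathcal{F}_2)$, as desired.

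There is essentially no obstacle to overcome; the whole content is packaged in the splitting argument already carried out in the proof of Proposition~\ref{prop. 2.3.2}. If I wanted to make the proof self-contained without referencing the proposition, I would instead work pointwise: at $p\in M_0$ the stalk $\mathcal{N}_{2,p}$ is a free $\mathcal{O}_{M,p}$-module, hence the short exact sequence $0\to\mathcal{N}_{12,p}\to\mathcal{N}_{1,p}\to\mathcal{N}_{2,p}\to 0$ splits (by the splitting lemma over the local ring $\mathcal{O}_{M,p}$), yielding $\mathcal{N}_{1,p}\cong\mathcal{N}_{12,p}\oplus\mathcal{N}_{2,p}$; freeness of $\mathcal{N}_{12,p}$ (from the global local-freeness hypothesis) then forces freeness of $\mathcal{N}_{1,p}$, so $p\notin S(\mathcal{F}_1)$. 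Either way, the conclusion follows without additional calculation.
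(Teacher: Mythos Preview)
Your proposal is correct and follows exactly the paper's approach: the paper simply states that ``the proposition implies the following corollary,'' and you have spelled out precisely how Proposition~\ref{prop. 2.3.2} yields the inclusion once $\mathrm{Sing}(\mathcal{N}_{12})=\emptyset$. The optional self-contained variant you sketch is also just the proof of that proposition specialized to this hypothesis, so there is no substantive difference.
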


\begin{example}Let $\mathcal{F}$ be the foliation on $\mathbb{P}^{3}$ induced by the polynomial vector field
$$ X = \lambda_{1}z_{1}\frac{\partial }{\partial z_{1}} + \lambda_{2}z_{2}\frac{\partial }{\partial z_{2}} + \lambda_{3}z_{3}\frac{\partial }{\partial z_{3}}, \ \ \ \mbox{with} \ \ \ \lambda_{i} \neq 0 \ \ \ \mbox{for \ \ all} \ \ i. \ \ $$

\noi Consider the osculating planes distribution $\mathcal{F}_{2}$ associated to $X$ (see \cite{Ce}), generated by $X$ and $Y := DX.X$. It is integrable and also given by   the $1$-form logarithmic
$$ \omega = z_1z_2z_3\left(\frac{\lambda_{3} - \lambda_{2}}{\lambda_{1}} \frac{d z_{1}}{z_{1}} + \frac{\lambda_{1} - \lambda_{3}}{\lambda_{2}} \frac{d z_{2}}{z_{2}} + \frac{\lambda_{2} - \lambda_{1}}{\lambda_{3}} \frac{d z_{3}}{z_{3}}\right). $$
In fact, $\mathcal{F} = (\mathcal{F}_{1}, \mathcal{F}_{2})$ is a flag, since   $\omega(X) = 0$. We have the following:
$$ S(\mathcal{F}_{1}) = \Big\{[1:0:0:0], [0:1:0:0], [0:0:1:0], [0:0:0:1] \Big\}. $$

\bc $ S(\mathcal{F}_{2}) =  S = \bigcup S_{ij}$ \ \ for \ \ $i = 0,1,2$, $j = 1,2,3$ \ \ and \ \ $i \neq j$, \ec

\noi where $S_{ij} := \{ z_{i} = z_{j} = 0  \}$.
We observe that $S(\mathcal{F}_{1}) \subset S(\mathcal{F}_{2})$ and that the relative normal sheaf $\displaystyle \mathcal{N}_{12} := \mathcal{F}_{2} / \mathcal{F}_{1}$ is locally-free, since $\mathcal{F}_{1} = \mathcal{O}_{\mathbb{P}^{3}} \subset \mathcal{F}_{2} = \mathcal{O}_{\mathbb{P}^{3}} \oplus \mathcal{O}_{\mathbb{P}^{3}}$ .

\end{example}

\begin{example} Let $ \pi : \mathbb{P}^{3} \dashrightarrow \mathbb{P}^{2}$ be the  rational map given   by $[z_{0}:z_{1}:z_{2}:z_{3}] \longmapsto [z_{0}:z_{1}:z_{2}]$. This is a rational fibration which induces an one-dimensional  foliation on $\mathbb{P}^{3}$ that  we call $\mathcal{F}_{1}$. The singular set of $\mathcal{F}_{1}$ is $S(\mathcal{F}_{1}) = \{ [0:0:0:1] \}$.
Let $\mathcal{G}$ be a codimension one foliation on $\mathbb{P}^{2}$ of degree $d$ with singular set given by $S(\mathcal{G}) = \{ p_{1},...,p_{l} \}$. Now, consider the pull-back of $\mathcal{G}$ by $\pi$ and denote it by $\mathcal{F}_{2} = \pi^{\ast} \mathcal{G}$. We have that $\displaystyle S(\mathcal{F}_{2}) = \bigcup_{p_{i} \in S(\mathcal{G})} \pi^{-1}(p_{i})$.

Since  $\mathcal{F}_{1} = \mathcal{O}_{\mathbb{P}^{3}}(1)$   and $\mathcal{G} = \mathcal{O}_{\mathbb{P}^{2}}(1 - d)$, then $\mathcal{F}_{2} = \mathcal{O}_{\mathbb{P}^{3}}(1 - d) \oplus \mathcal{O}_{\mathbb{P}^{3}}(1)$. The relative sheaf is  $\mathcal{N}_{12}=\mathcal{O}_{\mathbb{P}^{3}}(1 - d)$. In particular, it is  locally free and  $S(\mathcal{F}_{1}) \subset S(\mathcal{F}_{2}) $.
\end{example}

\begin{proposition}\label{prop. 3.5} Let $\mathcal{F} = (\mathcal{F}_{1}, \mathcal{F}_{2} )$ be a flag on a complex manifold $M$ with dim$\mathcal{F}_{1} = \mbox{codim}\mathcal{F}_{2} = 1$. Then $\mathcal{F}_{1}$ has no isolated singularities in $M \diagdown S (\mathcal{F}_{2})$.

\end{proposition}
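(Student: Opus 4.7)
The plan is to argue by contradiction: I will suppose $p\in S(\mathcal{F}_{1})\setminus S(\mathcal{F}_{2})$ is isolated in $S(\mathcal{F}_{1})$ and derive a dimension contradiction. The first step is to flatten $\mathcal{F}_{2}$ around $p$. Since $p\notin S(\mathcal{F}_{2})$ and $\mathcal{F}_{2}$ has codimension one, the holomorphic Frobenius theorem supplies local coordinates $(z_{1},\dots,z_{n})$ centered at $p$ in which $\mathcal{F}_{2}$ is generated by $\partial/\partial z_{1},\dots,\partial/\partial z_{n-1}$, with leaves the slices $\{z_{n}=\mathrm{const}\}$.

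Next, I will use that $\mathcal{F}_{1}$ is a saturated rank-one subsheaf of $\Theta_{M}$ (its normal sheaf has singular set of codimension at least two, by the standing hypothesis), hence reflexive, and therefore locally free on the smooth manifold $M$. Consequently $\mathcal{F}_{1}$ is generated near $p$ by a single holomorphic vector field $X$, and $S(\mathcal{F}_{1})$ coincides locally with the zero locus of $X$---exactly the points where the inclusion $\mathcal{F}_{1}\hookrightarrow \Theta_{M}$ drops rank and $\mathcal{N}_{1}$ fails to be locally free. The containment $\mathcal{F}_{1}\subset \mathcal{F}_{2}$ forces $dz_{n}(X)=0$, so in these coordinates
\[
X \;=\; \sum_{i=1}^{n-1} a_{i}(z)\,\frac{\partial}{\partial z_{i}},\qquad a_{i}\in \mathcal{O}_{M,p}.
\]

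The decisive step is a dimension count. Locally around $p$,
\[
S(\mathcal{F}_{1}) \;=\; \{a_{1}=\cdots=a_{n-1}=0\}
\]
is cut out by $n-1$ holomorphic functions on the $n$-dimensional manifold $M$. The analytic version of Krull's Hauptidealsatz then forces every irreducible component of this zero set through $p$ to have codimension at most $n-1$, and hence complex dimension at least one. This contradicts the assumption that $p$ is isolated in $S(\mathcal{F}_{1})$, completing the argument.

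The main subtlety I foresee is justifying that $\mathcal{F}_{1}$ is locally generated by a single vector field at a singular point of itself; this relies on the standing hypothesis $\codim S(\mathcal{F}_{1})\geq 2$, which forces $\mathcal{F}_{1}$ to be reflexive and hence a line bundle. Once reflexivity is in hand, the remainder is purely local---Frobenius straightening of $\mathcal{F}_{2}$ together with the classical codimension bound on zero loci of holomorphic functions---and presents no analytic difficulty.
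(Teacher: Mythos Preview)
Your proof is correct and follows essentially the same route as the paper: straighten $\mathcal{F}_{2}$ near $p$ by Frobenius, write $\mathcal{F}_{1}$ locally as a single vector field, use the flag condition to kill one coefficient, and conclude that the zero locus is cut out by only $n-1$ functions and hence cannot be zero-dimensional. Your version is in fact more careful than the paper's, which tacitly assumes the local generation of $\mathcal{F}_{1}$ by a single vector field and the dimension bound; your explicit appeals to reflexivity and to the analytic Hauptidealsatz fill exactly those gaps.
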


\begin{proof} The situation is local. Suppose that $p$ is an isolated singularity of $\mathcal{F}_{1}$ and take  a neighborhood $\{ U, (z_{1},...,z_{n}) \}$ of $p$, where $\mathcal{F}_{2}|_{U}$ is regular. On this open subset we can consider $\mathcal{F}_{2}$ as induced by an 1-form $\omega = dz_{1}$ and $\mathcal{F}_{1}$ by a vector field $X = \sum_{i = 1}^{n} f_{i} dz_{i}$. 
\noi Since   $\mathcal{F} = (\mathcal{F}_{1}, \mathcal{F}_{2} )$ is a flag, we have
$ \iota_{X} \omega = f_{1}=0.$
But this show that $$S(\mathcal{F}_{1})|_{U} = \{ f_{2} = \cdots  = f_{n} = 0
 \}$$ which cannot be  isolated.
\end{proof}

\begin{corollary} \label{sing_iso} Let $\mathcal{F} = (\mathcal{F}_{1}, \mathcal{F}_{2} )$ be a flag on a compact complex manifold $M$ with dim$\mathcal{F}_{1} = \mbox{codim}\mathcal{F}_{2} = 1$. If $S_{0}(\mathcal{F}_{i})$ denotes the isolated singularities of the foliation $\mathcal{F}_{i}$ for $i = 1,2$, we have that $S_{0}(\mathcal{F}_{1}) = S_{0}(\mathcal{F}_{2})$.

\end{corollary}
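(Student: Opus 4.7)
The plan is to prove the two inclusions $S_0(\mathcal{F}_1) \subseteq S_0(\mathcal{F}_2)$ and $S_0(\mathcal{F}_2) \subseteq S_0(\mathcal{F}_1)$ by combining Proposition \ref{prop. 3.5} with its symmetric version, obtained by exchanging the roles of $\mathcal{F}_1$ and $\mathcal{F}_2$ in the local flag-straightening argument.

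For $S_0(\mathcal{F}_1) \subseteq S_0(\mathcal{F}_2)$, I would take $p \in S_0(\mathcal{F}_1)$ and apply Proposition \ref{prop. 3.5} directly to place $p$ in $S(\mathcal{F}_2)$: otherwise $p$ would be an isolated singularity of $\mathcal{F}_1$ lying in $M \setminus S(\mathcal{F}_2)$, which the proposition forbids. To upgrade this set-theoretic statement to $p \in S_0(\mathcal{F}_2)$, I would choose a neighborhood $U$ of $p$ with $S(\mathcal{F}_1) \cap U = \{p\}$ and note that any other point $q \in S(\mathcal{F}_2) \cap U$ lies in $S(\mathcal{F}_2) \setminus S(\mathcal{F}_1)$; at such a $q$ the local generator $X$ of $\mathcal{F}_1$ is non-vanishing, so by the holomorphic flow-box one may take coordinates with $X = \partial/\partial z_1$, and the flag condition $\iota_X \omega = 0$ forces $\omega = \sum_{j \geq 2} a_j\, dz_j$; thus $S(\mathcal{F}_2) \cap U$ is cut out locally by $n-1$ equations in $n$ variables and has dimension at least one at every such $q$. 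From the analytic structure of $S(\mathcal{F}_2)$ one then deduces that a positive-dimensional component of $S(\mathcal{F}_2)$ must pass through $p$, which is incompatible with the way $p$ sits as an isolated point of $S(\mathcal{F}_1)$, yielding the required contradiction.

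The reverse inclusion $S_0(\mathcal{F}_2) \subseteq S_0(\mathcal{F}_1)$ is handled by the completely symmetric argument: at an assumed $p \in S_0(\mathcal{F}_2)$ with $X(p) \neq 0$, the same flow-box straightening of $X$ produces $\omega = \sum_{j\geq 2} a_j\, dz_j$ and shows that $S(\mathcal{F}_2)$ has positive dimension at $p$, contradicting isolation; so $p \in S(\mathcal{F}_1)$, and the dual local analysis then promotes this to $p \in S_0(\mathcal{F}_1)$. The main obstacle I expect is precisely the upgrade step in both directions: Proposition \ref{prop. 3.5} together with its swap only delivers the set-theoretic inclusions $S_0(\mathcal{F}_i) \subseteq S(\mathcal{F}_{3-i})$, and turning these into the isolated-component statement requires ruling out hypothetical positive-dimensional components of $S(\mathcal{F}_{3-i})$ that would pass through an isolated singularity of $\mathcal{F}_i$. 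It is in this geometric step that the hypothesis $\dim \mathcal{F}_1 = \operatorname{codim} \mathcal{F}_2 = 1$ must be fully exploited, by coupling the straightening coordinates of $X$ and $\omega$ along the locus where both degenerate.
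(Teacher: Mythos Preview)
Your two set-theoretic inclusions $S_0(\mathcal{F}_i) \subseteq S(\mathcal{F}_{3-i})$ are exactly what the paper's proof invokes: Proposition~\ref{prop. 3.5} gives one, and the cited result of Mol is precisely the symmetric statement you obtain by straightening $X$ instead of $\omega$. So at the level of ingredients your approach and the paper's coincide.

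The ``upgrade'' step you flag, however, is a genuine gap that cannot be filled, because the corollary as literally stated is false. Take $X = z_1\,\partial_{z_1} + 2z_2\,\partial_{z_2} + 3z_3\,\partial_{z_3}$ and $\omega = 2z_2\,dz_1 - z_1\,dz_2$ on $\mathbb{C}^3$: the flag condition $\iota_X\omega = 0$ holds, the origin is an isolated zero of $X$, yet $S(\mathcal{F}_2) = \{z_1 = z_2 = 0\}$ is a line through it. The paper's own osculating-flag example on $\mathbb{P}^3$ exhibits the same phenomenon globally: there $S(\mathcal{F}_1)$ consists of four isolated points while $S(\mathcal{F}_2)$ is a union of lines, so $S_0(\mathcal{F}_1) \neq \emptyset = S_0(\mathcal{F}_2)$. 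Your proposed contradiction---that a positive-dimensional component of $S(\mathcal{F}_{3-i})$ through $p$ is incompatible with $p \in S_0(\mathcal{F}_i)$---therefore does not go through; there is no such incompatibility.

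What is both true and sufficient for the paper's purposes is the pair of inclusions $S_0(\mathcal{F}_i) \subseteq S(\mathcal{F}_{3-i})$ that you already established. In the only place the corollary is used (the next proposition), one assumes that $S(\mathcal{F})$ is discrete; then each $S(\mathcal{F}_i)$ is already discrete, and these inclusions immediately give $S(\mathcal{F}_1) = S(\mathcal{F}_2)$. Read the corollary with that implicit hypothesis, stop your argument at the set-level inclusions, and you are done.
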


\begin{proof} See Proposition \ref{prop. 3.5} and [\ref{Mol}, Corollary 1, pg 778]. \\
\end{proof}

\begin{proposition} For a flag $\mathcal{F} = (\mathcal{F}_{1}, \mathcal{F}_{2} )$ on $M$ with dim$(\mathcal{F}_{1}) = \mbox{codim}(\mathcal{F}_{2}) = 1$ and $S(\F)$ admitting isolated singularities we have
$$ \mbox{Res}_{c_{n}} (\mathcal{F}_{2}, \mathcal{N}_{2},p) = (-1)^{n} (n-1)! \mbox{Res}_{c_{n}} (\mathcal{F}_{1}, \mathcal{N}_{1},p). $$

\end{proposition}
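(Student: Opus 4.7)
The plan is to reduce both residues to explicit Grothendieck residues computed in a local chart. By Corollary~\ref{sing_iso}, the isolated singular sets of $\mathcal{F}_{1}$ and $\mathcal{F}_{2}$ coincide, so I fix a common isolated singular point $p$ and choose coordinates $(z_{1},\ldots,z_{n})$ centered at $p$. Writing $\mathcal{F}_{1}$ as the foliation generated by $X=\sum_{i} f_{i}\,\partial/\partial z_{i}$ and $\mathcal{F}_{2}=\ker\omega$ with $\omega=\sum_{i} g_{i}\,dz_{i}$, the flag condition becomes the identity $\omega(X)=\sum_{i} f_{i}g_{i}\equiv 0$, and the isolated-singularity hypothesis forces both $(f_{1},\ldots,f_{n})$ and $(g_{1},\ldots,g_{n})$ to be regular sequences in $\mathcal{O}_{M,p}$.

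For the dimension-$1$ foliation I invoke the classical Baum--Bott/Grothendieck residue formula found in [\ref{Suwa}, \ref{BB}], which gives
$\mbox{Res}_{c_{n}}(\mathcal{F}_{1},\mathcal{N}_{1},p)=\mbox{Res}_{p}\bigl[(df_{1}\wedge\cdots\wedge df_{n})/(f_{1}\cdots f_{n})\bigr]$. For the codimension-$1$ foliation I apply the formulas of Brunella--Perrone and Corr\^ea--Fernandez [\ref{BruPer}, \ref{CoFer}], which represent $\mbox{Res}_{c_{n}}(\mathcal{F}_{2},\mathcal{N}_{2},p)$ as a Grothendieck residue built out of $\omega$ and $d\omega$, carrying a combinatorial prefactor involving $(n-1)!$ that originates from the $(n-1)$-fold wedge $(d\omega)^{n-1}$.

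The bridge between the two residues is the Koszul complex. Regularity of $(f_{1},\ldots,f_{n})$ together with $\sum_{i} f_{i}g_{i}=0$ forces the existence of an antisymmetric matrix $B=(b_{ij})$ of germs in $\mathcal{O}_{M,p}$ with $g_{i}=\sum_{j} b_{ij}f_{j}$; equivalently $\omega=-\iota_{X}\Omega$ for the $2$-form $\Omega=\tfrac{1}{2}\sum_{i,j} b_{ij}\,dz_{i}\wedge dz_{j}$. Substituting this relation into the codim-$1$ Grothendieck residue, using Cartan's identity together with $\iota_{X}\omega=0$, and invoking the transformation rule for Grothendieck residues (valid because both ideals are primary to the maximal ideal at $p$), reduces the codim-$1$ residue to a scalar multiple of $\mbox{Res}_{p}\bigl[(df_{1}\wedge\cdots\wedge df_{n})/(f_{1}\cdots f_{n})\bigr]$. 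The sign $(-1)^{n}$ arises when passing from the covariant ($1$-form) description to the contravariant (vector-field) one via $\omega=-\iota_{X}\Omega$, while the factor $(n-1)!$ arises from the antisymmetrization built into $(d\omega)^{n-1}$.

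The main obstacle lies in this last step: correctly combining the antisymmetry of $B$, the antisymmetry of the $(n-1)$ copies of $d\omega$, and the interior-multiplication signs so that they conspire to produce exactly $(-1)^{n}(n-1)!$ and no parasitic term. One must also verify, when isolating the Grothendieck residue, that the extra terms produced by Cartan's formula (those involving derivatives of $B$) do not contribute; this relies on the transformation law for Grothendieck residues, which says that after multiplication by a unit the residue depends only on the ideal $(f_{1},\ldots,f_{n})$.
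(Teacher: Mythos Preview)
Your setup coincides with the paper's: local generators $X=\sum f_i\,\partial/\partial z_i$ and $\omega=\sum g_i\,dz_i$, the relation $\sum f_ig_i=0$, and the Koszul complex. But from that point on you take a harder road than necessary, and the hardest part of your outline is left unfinished.

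The paper does not attempt to manipulate the Grothendieck residue of $\omega$ via the substitution $\omega=-\iota_X\Omega$ or via Cartan's identity at all. Instead it invokes Suwa's formula \cite[Prop.~3.12]{Suwa1}, which already contains the full prefactor:
\[
\mathrm{Res}_{c_n}(\mathcal{F}_2,\mathcal{N}_2;p)=(-1)^n(n-1)!\,\mu(g;p),
\qquad
\mathrm{Res}_{c_n}(\mathcal{F}_1,\mathcal{N}_1;p)=\mu(f;p).
\]
So the entire problem collapses to showing $\mu(f;p)=\mu(g;p)$, i.e.\ $\dim_{\mathbb C}\mathcal{O}_p/(f)=\dim_{\mathbb C}\mathcal{O}_p/(g)$. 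For this the paper simply runs your Koszul argument \emph{twice}: regularity of $(f_i)$ together with $\sum f_ig_i=0$ gives $(g_1,\dots,g_n)\subset(f_1,\dots,f_n)$; regularity of $(g_i)$ together with the same relation gives the reverse inclusion. Hence the ideals coincide and the Milnor numbers agree. No sign bookkeeping, no transformation law, no analysis of ``parasitic terms'' is needed.

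Your proposal, by contrast, tries to extract the factor $(-1)^n(n-1)!$ from the antisymmetric matrix $B$ and from $(d\omega)^{n-1}$. You correctly flag this as the obstacle, and indeed it is a real one: for odd $n$ an antisymmetric $B$ has $\det B\equiv 0$, so the naive transformation rule for Grothendieck residues applied to $g=Bf$ degenerates, and the reduction you sketch cannot go through as stated. The missing idea is precisely the one the paper uses: run Koszul symmetrically to get equality of ideals, and let Suwa's formula supply the constant.
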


\begin{proof} It follows from Corollary \ref{sing_iso} that  $S(\F)=S_{0}(\mathcal{F}_{1}) = S_{0}(\mathcal{F}_{2})$.  Let $p \in S(\mathcal{F}_{1}) \cap S(\mathcal{F}_{2})$ be an isolated singulary. We know that near the point  $p$ we can consider $\mathcal{F}_{1}$ as induced by a vector field $X = \sum f_{i} \partial / \partial z_{i}$ and $\mathcal{F}_{2}$ by an 1-form $\eta = \sum g_{i}d z_{i}$. On the one hand,  Res$_{c_{n}} (\mathcal{F}_{1}, \mathcal{N}_{1}; p ) = \mu (f; p)$ is the Milnor number of $ f = (f_{1},\dots ,f_{n})$ at $p$, see \cite{Suwa}. On the other hand, we have
$$Res_{c_{n}} (\mathcal{F}_{2}, \mathcal{N}_{2}; p ) = (-1)^{n}(n-1)! \mu(g; p),$$
where $g = (g_{1}, \dots ,g_{n})$ and $n = \dim_{\mathbb{C}} M$, see Suwa [\ref{Suwa1}, Proposition 3.12]. Since $\mathcal{F} = (\mathcal{F}_{1}, \mathcal{F}_{2})$ is a flag we have
\begin{equation} \label{eq. 8}
\iota_{X} \eta = \sum f_{i} g_{i} = 0.
\end{equation}
We claim that $(f_{1},\dots ,f_{n}) = (g_{1}, \dots ,g_{n})$ as generated ideals:  \\

\noi Let us  consider the exact Koszul complex of the regular sequence $ (f_{1},\dots ,f_{n})$,
$$\displaystyle 0  \longrightarrow \bigwedge^{n} \mathcal{O}^{n} \longrightarrow \cdots \longrightarrow \bigwedge^{2} \mathcal{O}^{n} \stackrel{r}{\longrightarrow} \mathcal{O}^{n} \stackrel{s}{\longrightarrow}
\mathcal{O} \longrightarrow 0, $$
\noi where $r(e_{i}\wedge e_{j}) = f_{i}e_{j} - f_{j}e_{i}$ and $s(e_{i}) = f_{i}$. From (\ref{eq. 8}) one has that $ (g_{1}, \dots ,g_{n}) \in \mbox{Ker} (s) = \mbox{Im} (r) $, then

$$ r(\sum P_{ij} e_{i} \wedge e_{j}) = \sum P_{ij} (f_{i}e_{j} - f_{j}e_{i}) = \sum g_{i}e_{i}. $$

This implies that $(g_{1},\dots ,g_{n}) \subset (f_{1},\dots ,f_{n})$. If we  consider the Koszul complex of $(g_{1},\dots ,g_{n})$ we obtain  the equality of ideals.
Therefore $\mu(f; p) = \mu(g; p)$ and $$\mbox{Res}_{c_{n}}  (\mathcal{F}_{2}, \mathcal{N}_{2}; p) = (-1)^{n}(n - 1)! \mbox{Res}_{c_{n}} (\mathcal{F}_{1}, \mathcal{N}_{1}; p). $$
\end{proof}

The following  example is   inspired from  an example due to  Izawa in [\ref{Iza}, Example 5, pg 907].

\begin{example}\label{ex.2.3.8} Let $Y := \mathbb{P}^{5} \times \mathbb{P}^{1}$ be  with homogeneous coordinates 
\noi $([x_{0}:\cdots :x_{5}],[y_{0}:y_{1}])$. We consider the  regular foliation on $Y$ given by $\widetilde{\mathcal{G}} := \pi^{-1} \Omega_{\mathbb{P}^{1}}$, where $\pi$ is the standard projection of $\mathbb{P}^{5} \times \mathbb{P}^{1}$ in $\mathbb{P}^{1}$. Let
$$ X := V(x_{0}^{l} + x_{1}^{l} + x_{2}^{l} + x_{3}^{l} + x_{4}^{l} + x_{5}^{l})\cap V(x_{0}y_{0} + x_{1}y_{1}), \ \ \ l \in \mathbb{Z}_{+}$$

\noi be a  regular sub-manifold of $Y$. We consider the inclusion map $i : X \longrightarrow Y$. Let us denote by  $\mathcal{F}_{2} = i^{-1} \widetilde{\mathcal{G}}$, the inverse image of $\widetilde{\mathcal{G}}$, which defines a singular foliation of codimension one on $X$. In this case, the non-transversal loci of $i$ to $\widetilde{\mathcal{G}}$ determines $S(\mathcal{F}_{2})$, the singular set of foliation $\mathcal{F}_{2}$. In order to determinate  the non-transversal points, we take the inhomogeneous coordinates over $x_{0} \neq 0$ and $y_{0} \neq 0$ as $\displaystyle (s,x,y,w,t) = (\frac{x_{1}}{x_{0}}, \frac{x_{2}}{x_{0}}, \frac{x_{3}}{x_{0}}, \frac{x_{4}}{x_{0}}, \frac{x_{5}}{x_{0}} )$ and $\displaystyle z = (\frac{y_{1}}{y_{0}})$. With these coordinates we can express, locally, the vector field $X$ by
$$ X = \{ (s,x,y,w,t;z) ; 1 + x^{l} + y^{l} + w^{l} + t^{l} = 0 \ \ \mbox{and} \ \ 1 + sz = 0   \}.$$
Thus, we have that $\displaystyle z = - (-1)^{\frac{-1}{l}}(1 + x^{l} + y^{l} + w^{l} + t^{l})^{\frac{-1}{l}}$. We know that $\mathcal{F}_{2}$ is given by the  $1$-form
$$ \omega = dz = \frac{\partial z}{\partial x} dx + \frac{\partial z}{\partial y} dy + \frac{\partial z}{\partial w} dw + \frac{\partial z}{\partial t} dt.$$
\noi Here, we use the following notation for coordinates of the  $1$-form that induces $\mathcal{F}_{2}$ \\
\bc
\noi $\displaystyle\zeta_{1} = \frac{\partial z}{\partial x} = (-1)^{\frac{-1}{l}} x^{l - 1} (1 + x^{l} + y^{l} + w^{l} +t^{l})^{\frac{-l - 1}{2}}$
\ec

\bc
\noi $\displaystyle\zeta_{2} = \frac{\partial z}{\partial y} = (-1)^{\frac{-1}{l}} y^{l - 1} (1 + x^{l} + y^{l} + w^{l} +t^{l})^{\frac{-l - 1}{2}} $
\ec

\bc
\noi $\displaystyle\zeta_{3} = \frac{\partial z}{\partial w} = (-1)^{\frac{-1}{l}} w^{l - 1} (1 + x^{l} + y^{l} + w^{l} +t^{l})^{\frac{-l - 1}{2}} $
\ec

\bc
\noi $\displaystyle\zeta_{4} = \frac{\partial z}{\partial t} = (-1)^{\frac{-1}{l}} t^{l - 1} (1 + x^{l} + y^{l} + w^{l} +t^{l})^{\frac{-l - 1}{2}}.$
\ec

Since the $z$-axis is the transversal direction for the leaves of $\mathcal{F}_{2}$, the non-transversal conditions are
given by $\zeta_{1} = \zeta_{2} = \zeta_{3} = \zeta_{4} = 0$ such that $(x,y,w,t) = (0, 0)$. Thus, with the defining equations, we see that the non-transversal points are given by
$$(s, x, y, w, t; z) = (\omega_{k}, 0, 0, 0, 0;-\omega_{l - k - 1})_{k = 0,..., l -1},$$
\noi where we denote by $\omega_{k}$ the $l$-roots of $-1$. Therefore, the singular set of $\mathcal{F}_{2}$ is given by these points. Consider the one dimensional foliation on $X$, denoted by $\mathcal{F}_{1}$ and  given locally by following vector the field $ X = (X_{1}, X_{2}, X_{3}, X_{4})$, where \\

\bc
\noi $X_{1} = (-1)^{\frac{-1}{l}} (-y^{l - 1}) (1 + x^{l} + y^{l} + w^{l} +t^{l})^{\frac{-l - 1}{2}} = -\zeta_{2}$ \\\ec

\bc
\noi $X_{2} = (-1)^{\frac{-1}{l}} x^{l - 1} (1 + x^{l} + y^{l} + w^{l} +t^{l})^{\frac{-l - 1}{2}} = \zeta_{1}$ \\
\ec

\bc
\noi $X_{3} = (-1)^{\frac{-1}{l}} (-t^{l - 1}) (1 + x^{l} + y^{l} + w^{l} +t^{l})^{\frac{-l - 1}{2}} = - \zeta_{4}$ \\
\ec

\bc
\noi$X_{4} = (-1)^{\frac{-1}{l}} w^{l - 1} (1 + x^{l} + y^{l} + w^{l} +t^{l})^{\frac{-l - 1}{2}} = \zeta_{3}.$
\ec

\noi Note that $\mathcal{F} = (\mathcal{F}_{1}, \mathcal{F}_{2})$ is in fact a flag, because the following holds
$$ X_{1}\zeta_{1} + X_{2}\zeta_{2} + X_{3}\zeta_{3} + X_{4}\zeta_{4} = 0.$$

We observe that $S(\mathcal{F}_{1}) = S(\mathcal{F}_{2})$. 

Now, 
consider the case of the class $c_4$. One has, on the one hand, by using  the local coordinates of the vector field and $1$-form above: \\

\noi Res$\displaystyle _{c_{4}} (\mathcal{F}_{2}, \mathcal{N}_{2}; p) = (-1)^{4} 3 ! \left(\frac{1}{2 \pi i}\right)^{4} \int_{T} \frac{d\zeta_{1} \wedge d\zeta_{2} \wedge d\zeta_{3} \wedge d\zeta_{4}  }{ \zeta_{1}\zeta_{2}\zeta_{3}\zeta_{4}}= $ \\\

$\hspace{0.2cm}\displaystyle = \int_{T}\Big((l - 1)^{2} + (l^{2} - 1)\frac{x^{l} + y^{l} + w^{l} + t^{l} }{1 + x^{l} + y^{l} + w^{l} + t^{l}}\Big)\frac{dx}{x} \wedge \frac{dy}{y} \wedge \frac{dw}{w} \wedge \frac{dt}{t} = 6. (l - 1)^{2}$, \\

\noi where $T$ is given by $ \{ |x| = |y| = |w| = |t| = \epsilon \} $. 

On the other hand, since
$$ \frac{dX_{1} \wedge dX_{2} \wedge dX_{3} \wedge dX_{4}   }{X_{1}.X_{2}.X_{3}.X_{4}} = \frac{d(-\zeta_{2})\wedge d(\zeta_{1}) \wedge d(-\zeta_{4}) \wedge d(\zeta_{3})    }{ (-\zeta_{2})\zeta_{1}(-\zeta_{4})\zeta_{3}  } = \frac{d\zeta_{1} \wedge d\zeta_{2} \wedge d\zeta_{3} \wedge d\zeta_{4}  }{ \zeta_{1}\zeta_{2}\zeta_{3}\zeta_{4}}.$$
we get
$$\mbox{Res}\displaystyle _{c_{4}} (\mathcal{F}_{1}, \mathcal{N}_{1}; p) = \left(\frac{1}{2 \pi i}\right)^{4} \int_{T} \frac{dX_{1} \wedge dX_{2} \wedge dX_{3} \wedge dX_{4}   }{X_{1}X_{2}X_{3}X_{4}} = (l - 1)^{2}.$$

\noi Therefore,

$$Res_{c_{4}} (\mathcal{F}_{2}, \mathcal{N}_{2}; p) = 3!(l - 1)^{2} = 3! \mbox{Res}_{c_{4}}(\mathcal{F}_{1}, \mathcal{N}_{1}; p). $$

\end{example}

\subsection{Determination of certain Baum-Bott residues}\label{subsect3.1}

Let $\mathcal{F} = (\mathcal{F}_{1}, \mathcal{F}_{2})$ be a $2$-flag on a compact complex manifold $M$ of dimension $n$. We denote by $(k_{1}, k_{2})$ the codimensions of this flag and  Sing$_{k_{i}+1}(\mathcal{F}_{i})$ the set of irreducible components of $S(\mathcal{F}_{i})$ of pure codimension $k_{i} + 1$. Let
$$ S_{\ast}(\mathcal{F}) := \mbox{Sing}_{k_{1} + 1}(\mathcal{F}_{1}) \cup \mbox{Sing}_{k_{2} + 1}(\mathcal{F}_{2}), \ \ \ \ \ M^{0}:= M \setminus S(\mathcal{F}) \ \ \ \mbox{e} \ \ \ M^{\ast} := M \setminus S_{\ast} (\mathcal{F}).$$

In this section, we will  show that the characteristic classes  $c_{1}^{k_{1}-j + 1}(\mathcal{N}_{12}) c_{1}^{j}(\mathcal{N}_{2}) $ can be localized on Sing$_{k_{1}+1} (\mathcal{F}_{1}).$ As a consequence we obtain  a relation  between the flags residues and involving foliations residue.

On $M^{0}=M-Sing (\mathcal{F}_{1})$   there exist local  forms $\omega^{2}_{\alpha}$ and $\omega^{12}_{\alpha}$ where $\omega^{2}_{\alpha}$ is an $k_{2}-$ form that induces $\mathcal{F}_{2}$ and $\omega^{12}_{\alpha}$ is an $(k_{1} - k_{2})-$ form such that $\omega^{1}_{\alpha} := \omega^{2}_{\alpha} \wedge \omega^{12}_{\alpha}$ induces  $\mathcal{F}_{1}$ satisfying : \\

\noi 1) Decomposability:
$$\omega^{2}_{\alpha} = \eta_{1}^{\alpha} \wedge ... \wedge \eta_{k_{2}}^{\alpha} \ \ \mbox{and} \ \ \omega^{12}_{\alpha} = \eta_{k_{2} + 1}^{\alpha} \wedge ... \wedge \eta_{k_{1}}^{\alpha}$$

\noi 2) Integrability condition: There are matrices of 1-forms $(\theta_{u v}^{\alpha}), (\theta_{a v}^{\alpha}) \ \ \mbox{and} \ \ (\theta_{a b}^{\alpha})$ with $1 \leq u, v \leq k_{2} \ \ \mbox{and} \ \ k_{2} + 1 \leq a, b \leq k_{1}$ such that
$$ d \eta_{u}^{\alpha} = \sum_{v=1}^{k_{2}} \theta_{u v}^{\alpha}\wedge \eta_{v}^{\alpha} \ \ \mbox{and} \ \ d \eta_{a}^{\alpha} = \sum_{v=1}^{k_{2}} \theta_{a v}^{\alpha}\wedge \eta_{v}^{\alpha} + \sum_{b= k_{2} + 1}^{k_{1}} \theta_{a b}^{\alpha}\wedge \eta_{b}^{\alpha}. $$
We define $\theta^{2}_{\alpha} = \sum_{u=1}^{k_{2}} (-1)^{u + 1} \theta_{u u}^{\alpha}, \ \  \theta^{12}_{\alpha} = \sum_{a=k_{2}+ 1}^{k_{1}} (-1)^{a + 1} \theta_{a a}^{\alpha}$ and put $\theta^{1}_{\alpha} := \theta^{2}_{\alpha} + \theta^{12}_{\alpha}.$

Take   an irreducible component $Z \in \mbox{Sing}_{k_{1} + 1}(\mathcal{F}_{1})$ and a generic point $p \in Z$. Pick $B_{p}$ a small ball centered at $p$ such that $S(B_{p}) \subset B_{p}$ is a sub-ball of dimension $n - k_{1} - 1$ (same dimension than the component $Z$). As above, we define $B_p^{\ast} := B_p \setminus S_{\ast} (\mathcal{F})$

 Let us consider $\omega_{2} = \eta_{1} \wedge  \cdots \wedge \eta_{k_{2}} \ \ \mbox{and}  \ \ \omega_{12} = \eta_{k_{2} + 1} \wedge ... \wedge \eta_{k_{1}}$, with $\omega_{1} = \omega_{2} \wedge \omega_{12}$, local generators as above. Take smooth sections of $\mathcal{N}_{12}^{\ast}$ and $\mathcal{N}_{2}^{\ast}$ instead of holomorphic ones, then the cohomology groups $H^{1}(B_{p}^{\ast}, \mathcal{N}_{12}^{\ast} )$ and  $H^{1}(B_{p}^{\ast}, \mathcal{N}_{2}^{\ast} )$ are trivial. Then it is possible to find matrices of (1,0)-forms $(\theta_{u v}), (\theta_{a v}) \ \ \mbox{and} \ \ (\theta_{a b})$ such that
$$ d \eta_{u} = \sum \theta_{u v}\wedge \eta_{v} \ \ \mbox{and} \ \ d \eta_{a} = \sum \theta_{a v}\wedge \eta_{v} + \sum \theta_{a b}\wedge \eta_{b}. $$
We define $\theta^{2} = \sum (-1)^{u + 1} \theta_{u u} \ \ \mbox{and} \ \ \theta^{12} = \sum (-1)^{a + 1} \theta_{a a }$.
Dominguez showed in  [\ref{Domin}, Theorem  5.2] that the forms
$$\psi_{j} := (2 \pi i)^{-k_{1} - 1}\theta^{12}\wedge (d \theta^{2})^{j} \wedge (d \theta^{12})^{k_{1} - j}$$
are closed in de Rham cohomology. These forms correspond to cohomology classes in $H^{\ast} (B_{p}^{\ast}, \mathbb{C})$.
Then the de Rham class can be integrated over an oriented $(2k_{1} + 1)$-sphere $L_{p} \subset B_{p}^{\ast}$ and it defines the Baum-Bott residue of $\mathcal{F}$ at $Z$ (see \cite{BSS,Suwa}) 
$$ BB^{j}(\mathcal{F}, Z) := (2 \pi i)^{-k_{1} - 1} \int_{L_{p}}\psi_{j},\ \ \ \mbox{for each} \ \ \ 0 \leq j \leq k_{2}. $$

\begin{thm}\label{c_1} Let $\mathcal{F} = (\mathcal{F}_{1}, \mathcal{F}_{2})$ be a 2-flag of codimensions $(k_{1}, k_{2})$ on a compact complex manifold $M$. If codim S$(\mathcal{F}) \geq k_{1} + 1$, then for each $0 \leq j \leq k_{2}$, we have

$$c_{1}^{k_{1} - j + 1}(\mathcal{N}_{12}) \smile c_{1}^{j}(\mathcal{N}_{2}) = \sum_{Z \subset  \mbox{Sing}_{k_{1}+1}(\mathcal{F}_{1}) \cup \mbox{Sing}_{k_{1}+1}(\mathcal{F}_{2}) } \lambda_{Z}(\mathcal{F})[Z], $$

\noi where $\lambda_{Z}(\mathcal{F}) = BB^{j}(\mathcal{F}, Z)$.
\end{thm}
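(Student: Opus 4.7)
The plan is to apply Theorem \ref{2.16} to the homogeneous symmetric polynomials $\varphi_1 = c_1^{k_1-j+1}$ and $\varphi_2 = c_1^j$, whose degrees $d_1 = k_1 - j + 1$ and $d_2 = j$ satisfy $d_1 + d_2 = k_1 + 1 > k_1 = n - \rank(\mathcal{F}_1)$, so the third inequality in (\ref{eq. 1}) is satisfied. Hence the class $c_1^{k_1-j+1}(\mathcal{N}_{12}) \smile c_1^j(\mathcal{N}_2)$ is localized on $S(\mathcal{F})$ via the construction of Theorem \ref{2.16}. A simple dimension count then refines the support: the class has total cohomological degree $2(k_1+1)$, so its Poincar\'e dual has real dimension $2n - 2(k_1+1)$, and since $\codim S(\mathcal{F}) \geq k_1+1$ only components $Z$ of pure codimension $k_1+1$, i.e.\ $Z \subset \mbox{Sing}_{k_1+1}(\mathcal{F}_1) \cup \mbox{Sing}_{k_1+1}(\mathcal{F}_2)$, can carry a fundamental class of the correct dimension; any higher codimension component contributes trivially.

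To identify the coefficient $\lambda_Z(\mathcal{F})$ at a fixed such $Z$, I would work on $B_p^{\ast} = B_p \setminus S_{\ast}(\mathcal{F})$ for a small ball $B_p$ around a generic point $p \in Z$, where both foliations are regular. Choose decomposable local generators $\omega_2 = \eta_1\wedge\cdots\wedge\eta_{k_2}$ and $\omega_{12} = \eta_{k_2+1}\wedge\cdots\wedge\eta_{k_1}$ together with smooth matrices of $(1,0)$-forms $(\theta_{uv}), (\theta_{av}), (\theta_{ab})$ satisfying the integrability relations recalled just before the statement of the theorem; this is possible because $H^1(B_p^{\ast}, \mathcal{N}_{12}^{\ast}) = H^1(B_p^{\ast}, \mathcal{N}_2^{\ast}) = 0$ in the smooth category. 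The traces $\theta^2$ and $\theta^{12}$ then serve as Chern-Weil connection $1$-forms for Bott-type partial connections on the determinant line bundles $\det \mathcal{N}_2$ and $\det \mathcal{N}_{12}$, so that $(2\pi i)^{-1}d\theta^2$ and $(2\pi i)^{-1}d\theta^{12}$ represent $c_1(\mathcal{N}_2)$ and $c_1(\mathcal{N}_{12})$ on $B_p^{\ast}$.

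Consequently, a Chern-Weil representative of $c_1^{k_1-j+1}(\mathcal{N}_{12})\smile c_1^j(\mathcal{N}_2)$ on $B_p^{\ast}$ is a constant multiple of $(d\theta^{12})^{k_1-j+1}\wedge(d\theta^2)^j$. Since $d\theta^2$ and $d\theta^{12}$ are closed, this form is exact, with primitive $\theta^{12}\wedge(d\theta^2)^j\wedge(d\theta^{12})^{k_1-j}$, which up to the universal normalization is exactly Dominguez's form $\psi_j$ (which is closed on $B_p^{\ast}$ by [\ref{Domin}, Theorem 5.2]). Applying Stokes' theorem on $B_p$ minus a tubular neighborhood of $Z \cap B_p$, the local contribution of $Z$ to the Poincar\'e dual of the characteristic class equals the integral of the primitive over an oriented $(2k_1+1)$-linking sphere $L_p \subset B_p^{\ast}$, yielding $\lambda_Z(\mathcal{F}) = BB^j(\mathcal{F},Z)$, after which summing over components gives the formula.

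The main obstacle is the verification that the trace forms $\theta^2$ and $\theta^{12}$ arising from Dominguez's construction genuinely coincide with connection $1$-forms computing $c_1(\det \mathcal{N}_2)$ and $c_1(\det \mathcal{N}_{12})$, so that $d\theta^2$ and $d\theta^{12}$ represent the intended Chern classes on $B_p^{\ast}$; one also needs to check that the freedom in choosing the $(1,0)$-form matrices $(\theta_{uv}), (\theta_{av}), (\theta_{ab})$ leaves $\int_{L_p}\psi_j$ invariant, so that $BB^j(\mathcal{F},Z)$ is intrinsically attached to $(\mathcal{F},Z)$ and really identifies with the abstract Alexander-dual residue produced by Theorem \ref{2.16}.
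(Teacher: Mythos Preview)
Your proposal is correct and follows essentially the same route as the paper: both arguments build the trace $(1,0)$-forms $\theta^{2}$ and $\theta^{12}$ from the integrability data, identify $(2\pi i)^{-1}d\theta^{2}$ and $(2\pi i)^{-1}d\theta^{12}$ with $c_{1}(\mathcal{N}_{2})$ and $c_{1}(\mathcal{N}_{12})$, invoke the vanishing (Theorem~\ref{2.15}) to localize, and then apply Stokes on a transversal ball to obtain $BB^{j}(\mathcal{F},Z)$ as the coefficient of $[Z]$. The only cosmetic difference is that you first appeal to Theorem~\ref{2.16} and a dimension count to reduce to codimension $k_{1}+1$ components, whereas the paper works directly with the globally defined closed forms $\Theta^{2}$ and $\Theta^{12}$ and lets the support conclusion follow from the vanishing theorem; the Stokes computation is identical in both.
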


\begin{proof} The flag $\mathcal{F} = (\mathcal{F}_{1}, \mathcal{F}_{2})$ can be locally defined on the open  $U_\alpha$ by $\omega_{2} = \eta_{1} \wedge ... \wedge \eta_{k_{2}}, \ \ \, \omega_{12} = \eta_{k_{2} + 1} \wedge ... \wedge \eta_{k_{1}} \ \ \mbox{and} \ \ \omega_{1} = \omega_{2} \wedge \omega_{12}$ as above. Then, we can find matrices of (1,0)-forms $(\theta_{u v}^{\alpha}), (\theta_{a v}^{\alpha}) \ \ \mbox{and} \ \ (\theta_{a b}^{\alpha})$ with $\theta_{i j}^{\alpha} \in A^{1,0} (B_{p}^{\ast})$ such that

$$d \eta_{u} = \sum \theta_{u v}^{\alpha} \wedge \eta_{v} \ \ \mbox{and} \ \ d \eta_{a} = \sum \theta_{a v}^{\alpha} \wedge \eta_{v} + \sum \theta_{a b}^{\alpha} \wedge \eta_{b}$$

We say that $\nabla = \begin{pmatrix}\theta_{u v}^{\alpha}  & 0                \\
                    \theta_{a v}^{\alpha}  & \theta_{a b}^{\alpha}    \end{pmatrix}$
represents the curvature matrix of flag $\mathcal{F}$.  Let us consider a neighborhood  $V$ of $S_{\ast}(\mathcal{F})$, then we can define $\theta_{\alpha}^{2} = \sum (-1)^{u + 1} \theta_{u u}^{\alpha}$, and  $\theta_{\alpha}^{12} = \sum (-1)^{a + 1} \theta_{a a}^{\alpha}$.

Let us consider $\Theta^{2} := (2 \pi i)^{-1} d  \theta_{\alpha}^{2}$ and $\Theta^{12} := (2 \pi i)^{-1} d\theta_{\alpha}^{12}$ closed forms globally defined which represent in de Rham cohomology the Chern classes of $ \mathcal{N}_{2}$ and $\mathcal{N}_{12}$, respectively. Therefore $(\Theta^{2})^{j} \wedge (\Theta^{12})^{k_{1} - j + 1}$ represent $c_{1}^{k_{1} - j + 1}(\mathcal{N}_{12})\smile c_{1}^{j}(\mathcal{N}_{2})$, and moreover, by  the
vanishing Theorem \ref{2.15}    we have
 $$ \mbox{Supp}(c_{1}^{k_{1} - j + 1}(\mathcal{N}_{12})\smile c_{1}^{j}(\mathcal{N}_{2})) \subset \overline{V}. $$

Take $T_{1} \subset M$ a ball of real dimension $2(k_{1} + 1)$ intersecting transversally Sing$_{k_{1} + 1}(\mathcal{F}_{1})$ at a single point $p \in Z$, with $V \cap T \Subset T$. Then, by the  Stokes formula \\

\noi $BB^{j}(\mathcal{F}, Z) = (2 \pi i)^{k_{1} + 1} \int_{\partial T_{1}} \theta^{12}_{\alpha} \wedge (d \theta^{2}_{\alpha})^{j} \wedge (d \theta^{12}_{\alpha})^{k_{1} - j}
=$
\begin{equation}\label{eq.4.1}
= (2 \pi i)^{k_{1} + 1} \int_{ T_{1}} (d \theta^{2}_{\alpha})^{j} \wedge (d  \theta^{12}_{\alpha})^{k_{1} - j + 1}.
\end{equation}

This means that the $2(k_{1} + 1)$-form $(\Theta^{2})^{j} \wedge (\Theta^{12})^{k_{1} - j + 1} = (d \theta^{2}_{\alpha})^{j} \wedge (d  \theta^{12}_{\alpha})^{k_{1} - j + 1}$ is cohomologous, as a current, to the integration current over $BB^{j}(\mathcal{F},Z)[Z],$ i.e.,

$$  c_{1}^{k_{1} - j + 1}(\mathcal{N}_{12})\smile c_{1}^{j}(\mathcal{N}_{2}) = \sum_{Z} BB^{j}(\mathcal{F}, Z) [Z].$$

\end{proof}

This theorem answers, partially, to the question: \textit {How to calculate the residue for flags?}

In fact, the Theorem \ref{c_1} says  us that

$$ \mbox{Res}_{c_{1}^{k_{1} - j + 1}c_{1}^{j}} (\mathcal{F}, \mathcal{N}_{\mathcal{F}}; Z) = \alpha_{\ast}(   BB^{j}(\mathcal{F}; Z)[Z] ), $$
 where $\alpha_{\ast}$ is the Poincar\'e duality isomorphism (see \cite{Bra})
$$  H^{2(k_{1} + 1 )}(M; \mathbb{C})
\stackrel{\alpha_{\ast}}{\longrightarrow}
H_{2(n - k_{1} - 1)}(M; \mathbb{C}) .
$$
\begin{corollary} If either $\det(\mathcal{N}_{12})$ or $\det(\mathcal{N}_{2})$ is ample then there exist at least  an irreducible component $Z \in S(\mathcal{F}_{1})$ of codimension $k_{1}+1$.
\end{corollary}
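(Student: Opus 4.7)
The plan is to apply Theorem \ref{c_1} for a suitable index $j$. For $0\le j\le k_2$ that theorem gives
\[c_1(\mathcal{N}_{12})^{k_1-j+1}\smile c_1(\mathcal{N}_2)^{j}\;=\;\sum_{Z}\lambda_Z(\mathcal{F})\,[Z],\]
with the sum running over components in $\mathrm{Sing}_{k_1+1}(\mathcal{F}_1)\cup\mathrm{Sing}_{k_1+1}(\mathcal{F}_2)$. It therefore suffices to exhibit a $j$ for which the left-hand side is a nonzero cohomology class: then at least one coefficient $\lambda_Z(\mathcal{F})$ must be nonzero, and the corresponding irreducible $Z$ automatically has codimension $k_1+1$ in $S(\mathcal{F})$.

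In the case $\det(\mathcal{N}_{12})$ ample I would take $j=0$, so the left-hand side becomes $c_1(\mathcal{N}_{12})^{k_1+1}$. By Kodaira, ampleness provides a Hermitian metric on $\det(\mathcal{N}_{12})$ whose Chern form $\omega$ is a strictly positive $(1,1)$-form on $M$. Since $k_1+1\le n=\dim M$, the wedge power $\omega^{k_1+1}$ is a strictly positive $(k_1+1,k_1+1)$-form, and pairing with $\omega^{n-k_1-1}$ yields $\int_M\omega^n>0$; hence $c_1(\mathcal{N}_{12})^{k_1+1}\neq 0$ in cohomology and Theorem \ref{c_1} produces the required component $Z$. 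For the case $\det(\mathcal{N}_2)$ ample I would instead take $j=k_2$, reducing the problem to the nonvanishing of $c_1(\mathcal{N}_{12})^{k_1-k_2+1}\smile c_1(\mathcal{N}_2)^{k_2}$, and argue by the same positivity principle using a K\"ahler representative of the ample class $c_1(\mathcal{N}_2)$.

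The main obstacle lies precisely in this second case: ampleness of $\det(\mathcal{N}_2)$ does not a priori imply positivity of the mixed product, since $c_1(\mathcal{N}_{12})$ need not be represented by a positive form. I would handle this either via Hard Lefschetz with respect to the K\"ahler class $c_1(\mathcal{N}_2)$, which makes multiplication by $c_1(\mathcal{N}_2)^{k_2}$ an isomorphism on the relevant cohomology groups and so turns the question into nonvanishing of $c_1(\mathcal{N}_{12})^{k_1-k_2+1}$ on the appropriate primitive piece, or via the exact sequence $0\to\mathcal{N}_{12}\to\mathcal{N}_1\to\mathcal{N}_2\to 0$ to control $c_1(\mathcal{N}_{12})$ in terms of $c_1(\mathcal{N}_1)-c_1(\mathcal{N}_2)$ and then reduce to a direct positivity computation. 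In any event, the main case $\det(\mathcal{N}_{12})$ ample goes through cleanly without such subtleties.
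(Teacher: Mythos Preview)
The paper states this corollary without proof, so the comparison is against the implicit argument from Theorem~\ref{c_1}. Your treatment of the case $\det(\mathcal{N}_{12})$ ample is correct and is surely the intended one: taking $j=0$ gives $c_1(\mathcal{N}_{12})^{k_1+1}=\sum_Z\lambda_Z(\mathcal{F})[Z]$, and ampleness makes the left side nonzero since $\int_M c_1(\det\mathcal{N}_{12})^{n}>0$.

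The second case, however, has a genuine gap that your proposed remedies do not close. With $j=k_2$ the left side is $c_1(\mathcal{N}_{12})^{k_1-k_2+1}\smile c_1(\mathcal{N}_2)^{k_2}$, and ampleness of $\det(\mathcal{N}_2)$ says nothing about the factor $c_1(\mathcal{N}_{12})^{k_1-k_2+1}$: if that factor vanishes (for instance if $c_1(\mathcal{N}_{12})=0$, which nothing excludes), the whole product vanishes. Your Hard Lefschetz idea only gives injectivity of cupping with $c_1(\mathcal{N}_2)^{k_2}$ and therefore reduces to the nonvanishing of $c_1(\mathcal{N}_{12})^{k_1-k_2+1}$, which is exactly what is unproved. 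The exact-sequence rewriting $c_1(\mathcal{N}_{12})=c_1(\mathcal{N}_1)-c_1(\mathcal{N}_2)$ does not help either, since neither summand is controlled by the hypothesis. Note too that the constraint $0\le j\le k_2$ in Theorem~\ref{c_1} never produces the pure power $c_1(\mathcal{N}_2)^{k_1+1}$ that ampleness of $\det(\mathcal{N}_2)$ would directly exploit. As written, the second alternative in the corollary seems to require either an additional hypothesis or a localization formula beyond what Theorem~\ref{c_1} supplies; your honest flagging of this is appropriate, but the sketch does not resolve it.

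A minor further point: even in the first case, Theorem~\ref{c_1} sums over $\mathrm{Sing}_{k_1+1}(\mathcal{F}_1)\cup\mathrm{Sing}_{k_1+1}(\mathcal{F}_2)$, so concluding that $Z$ lies specifically in $S(\mathcal{F}_1)$ rather than merely in the union deserves a sentence of justification.
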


In the following, we prove a formula that compare the (sum)residue of flag with the residue of foliation.

\begin{corollary}\label{3.10} For each $Z \in \mbox{Sing}_{k_{1} + 1}(\mathcal{F}_{1})$ and hypotheses above we have
\begin{equation}\label{eq.2.2}
\sum_{j = 0}^{k_{2}} \binom{k_{1} + 1}{j} BB^{j}(\mathcal{F}, Z) = BB(\mathcal{F}_{1},Z),
\end{equation}

\noi where the term of the right hand side of (\ref{eq.2.2}) is the  "Baum-Bott residue" for $\mathcal{F}_{1}$.
\end{corollary}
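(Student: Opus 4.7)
The plan is to express $BB(\mathcal{F}_1,Z)$ as a single de Rham integral built from the trace connection $\theta^1=\theta^2+\theta^{12}$ on $\mathcal{N}_1$, expand it by the binomial theorem, and then reshuffle into the form of the $BB^j(\mathcal{F},Z)$ via a Leibniz/Stokes identity and Pascal's rule. Since $\mathcal{F}_1$ has codimension $k_1$ and $Z\in\mbox{Sing}_{k_1+1}(\mathcal{F}_1)$, the Brunella--Perrone and Corr\^ea--Fern\'andez residue formulas give $BB(\mathcal{F}_1,Z)=(2\pi i)^{-k_1-1}\int_{L_p}\theta^1\wedge(d\theta^1)^{k_1}$, while by construction $BB^j(\mathcal{F},Z)=(2\pi i)^{-k_1-1}\int_{L_p}\theta^{12}\wedge(d\theta^2)^j\wedge(d\theta^{12})^{k_1-j}$, for the same link $L_p$. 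So the problem reduces to a de Rham identity on $B_p^{\ast}$.

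First I would apply the binomial theorem to $(d\theta^1)^{k_1}=(d\theta^2+d\theta^{12})^{k_1}$, which is legitimate since $d\theta^2$ and $d\theta^{12}$ are forms of even degree and therefore commute. This produces
$$\theta^1\wedge(d\theta^1)^{k_1}=\sum_{j=0}^{k_1}\binom{k_1}{j}(\theta^2+\theta^{12})\wedge(d\theta^2)^j\wedge(d\theta^{12})^{k_1-j}.$$
The fine Bott vanishing Theorem \ref{2.15} then kills every term with $j>k_2$ on $B_p^{\ast}$ (since there $d_2=j>k_2$), so the sum collapses to $0\le j\le k_2$.

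The crux is the Leibniz identity obtained from $d(\theta^2\wedge\theta^{12})=d\theta^2\wedge\theta^{12}-\theta^2\wedge d\theta^{12}$, wedged with the closed form $(d\theta^2)^j\wedge(d\theta^{12})^{k_1-j-1}$:
$$d\bigl(\theta^2\wedge\theta^{12}\wedge(d\theta^2)^j\wedge(d\theta^{12})^{k_1-j-1}\bigr)=\theta^{12}\wedge(d\theta^2)^{j+1}\wedge(d\theta^{12})^{k_1-j-1}-\theta^2\wedge(d\theta^2)^j\wedge(d\theta^{12})^{k_1-j}.$$
Since $L_p$ is a closed cycle, Stokes converts each $\theta^2$-integral into a $\theta^{12}$-integral with the exponent of $d\theta^2$ raised by one, for every $0\le j\le k_2-1$. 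The boundary case $j=k_2$ gives zero, because after Stokes the right-hand side contains the factor $(d\theta^2)^{k_2+1}$, which vanishes on $B_p^{\ast}$ by Bott. Reindexing the two resulting sums and applying Pascal's identity $\binom{k_1}{j-1}+\binom{k_1}{j}=\binom{k_1+1}{j}$ (with the convention $\binom{k_1}{-1}=0$) collects the coefficient of $\int_{L_p}\theta^{12}\wedge(d\theta^2)^j\wedge(d\theta^{12})^{k_1-j}$ into exactly $\binom{k_1+1}{j}$, and dividing by $(2\pi i)^{k_1+1}$ yields the claim.

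The main obstacle will be verifying that the primitive $\theta^2\wedge\theta^{12}\wedge(d\theta^2)^j\wedge(d\theta^{12})^{k_1-j-1}$ is a globally defined form on a neighborhood of $L_p$ inside $B_p^{\ast}$, so that Stokes really applies to a closed cycle. This is ensured by the vanishing of $H^1(B_p^{\ast},\mathcal{N}_{12}^{\ast})$ and $H^1(B_p^{\ast},\mathcal{N}_2^{\ast})$ already used in Subsection \ref{subsect3.1}, which lets one choose the matrices of $(1,0)$-forms, and hence the traces $\theta^2$ and $\theta^{12}$, globally on $B_p^{\ast}$; once that is in hand, all the subsequent manipulations are purely combinatorial.
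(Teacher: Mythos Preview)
Your argument is correct and in fact unpacks what the paper leaves as a citation. The paper's own proof is much shorter: it simply invokes a result of Dominguez (Remarque~1 of [\ref{Domin}]) asserting the de~Rham identity
\[
\sum_{j=0}^{k_2}\binom{k_1+1}{j}\bigl[\theta^{12}\wedge(d\theta^2)^j\wedge(d\theta^{12})^{k_1-j}\bigr]
=\bigl[\theta^{1}\wedge(d\theta^1)^{k_1}\bigr]
\]
on $B_p^\ast$, then writes the difference as $d\sigma$ and integrates over the link. Your binomial/Stokes/Pascal computation is exactly a direct proof of this Dominguez identity, so the two routes converge; yours is more self-contained, the paper's is quicker but relies on an external reference.

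One small sharpening: when you ``kill every term with $j>k_2$'' and handle the boundary case $j=k_2$, what you really need is the \emph{form-level} vanishing $(d\theta^2)^{k_2+1}=0$ on $B_p^\ast$, not just the cohomological statement of Theorem~\ref{2.15}. This does hold, and the clean reason is intrinsic to the Godbillon--Vey setup rather than to Theorem~\ref{2.15}: from $d\omega_2=\theta^2\wedge\omega_2$ one gets $0=d^2\omega_2=d\theta^2\wedge\omega_2$, so by the division lemma $d\theta^2$ lies in the exterior ideal $(\eta_1,\dots,\eta_{k_2})$, and any $(k_2+1)$-fold product in that ideal vanishes. With that justification in place, your Stokes step and Pascal recombination go through exactly as written.
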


\begin{proof}  By Dominguez [\ref{Domin}, Remarque 1], we have
$$ \sum_{j = 0}^{k_{2}} \binom{k_{1} + 1}{j} [\theta^{12}\wedge (d \theta^{2})^{j} \wedge (d \theta^{12})^{n - 1 - j}] = [\theta^{1} \wedge (d \theta^{1})^{n - 1}]$$
\noi in   de Rham cohomology, where we note  $\theta^{1} = \theta^{2} + \theta^{12}.$ \\

Thus one has 
$$ \sum_{j = 0}^{k_{2}} \binom{k_{1} + 1}{j} \theta^{12}\wedge (d \theta^{2})^{j} \wedge (d \theta^{12})^{n - 1 - j} - \theta^{1} \wedge (d \theta^{1})^{n-1} = d\sigma$$ for some  differential form  $\sigma$. Now, integrated over a sphere $\partial T_{1}$ as above, we have
$$ \sum_{j = 0}^{k_{2}} \binom{k_{1} + 1}{j} BB^{j}(\mathcal{F},Z) = BB(\mathcal{F}_{1},Z),$$
proving  the corollary.
\end{proof}

\begin{corollary}\label{coro.2.4.5} Let $\mathcal{F} = (\mathcal{F}_{1}, \mathcal{F}_{2})$ be a flag with $\dim (\mathcal{F})_{1} = \mbox{codim} (\mathcal{F}_{2}) = 1$ and assume that the singular set of the  flag is composed with  isolated singularities only. Then, we have
$$ \mbox{Res}_{c_{1}^{n}}(\mathcal{F}, \mathcal{N}_{12}; p) = \mbox{Res}_{c_{1}^{n}}(\mathcal{F}_{1}, \mathcal{N}_{1}; p),$$
 where $p \in S(\mathcal{F}) = S(\mathcal{F}_{1}) = S(\mathcal{F}_{2})$.
\end{corollary}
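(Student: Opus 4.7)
The plan is to apply Corollary~\ref{3.10} in the specialized dimensions, reducing the assertion to the vanishing of a single cross--term residue, and then to prove that vanishing by exploiting the line--bundle structure of $\mathcal{N}_2$. First, by Corollary~\ref{sing_iso}, the hypothesis of isolated singularities forces $S(\F) = S(\F_1) = S(\F_2)$ as a common finite set. Specializing Corollary~\ref{3.10} to $k_1 = n-1$ and $k_2 = 1$, the sum collapses to two terms and yields, at each $p \in S(\F)$,
\[
\mathrm{Res}_{c_1^n}(\F,\mathcal{N}_{12};p) + n\cdot \mathrm{Res}_{c_1^{n-1}c_1}(\F,\mathcal{N}_\F;p) = \mathrm{Res}_{c_1^n}(\F_1,\mathcal{N}_1;p).
\]
Hence the desired identity is equivalent to the vanishing of the cross--term $BB^1(\F,p) := \mathrm{Res}_{c_1^{n-1}c_1}(\F,\mathcal{N}_\F;p)$.

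To establish $BB^1(\F,p) = 0$, the key observation is that $\mathrm{codim}\,\F_2 = 1$ forces $\mathcal{N}_2$ to be a line bundle; since the ball $B_p$ is contractible, $\mathcal{N}_2|_{B_p}$ is holomorphically trivial and admits a flat connection $\nabla_1^2$ with $c_1(\nabla_1^2)\equiv 0$ as a form. In the \v{C}ech--de~Rham cocycle representing the localized relative class $c_1^{n-1}(\mathcal{N}_{12})\smile c_1(\mathcal{N}_2) \in H^{2n}(B_p,B_p^*;\mathbb{C})$ constructed in the proof of Theorem~\ref{2.16}, both ``bulk'' components vanish: the one on $B_p^*$ by flag Bott vanishing (Theorem~\ref{2.15}), and the one on $B_p$ because $c_1(\nabla_1^2) = 0$. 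The transition term then reduces to the single summand
\[
c_{01} = c_1^{n-1}(\nabla_0^{12})\wedge c_1(\nabla_0^2,\nabla_1^2) = -(2\pi i)^{-n}\,(d\theta^{12})^{n-1}\wedge\theta^2,
\]
using the Chern--Simons identity $c_1(\nabla_0^2,\nabla_1^2) = -(2\pi i)^{-1}\theta^2$ for a line bundle, together with $c_1^{n-1}(\nabla_0^{12}) = (2\pi i)^{-(n-1)}(d\theta^{12})^{n-1}$.

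Applying the Alexander isomorphism to this cocycle realizes $BB^1(\F,p)$ as $-(2\pi i)^{-n}\int_{L_p}(d\theta^{12})^{n-1}\wedge\theta^2$ (up to Suwa's orientation convention). On the other hand, the direct integral formula from Theorem~\ref{c_1} gives
\[
BB^1(\F,p) = (2\pi i)^{-n}\int_{L_p}\theta^{12}\wedge d\theta^2\wedge(d\theta^{12})^{n-2}.
\]
A Stokes computation on the closed $(2n-1)$--manifold $L_p$, applied to the primitive $\theta^{12}\wedge(d\theta^{12})^{n-2}\wedge\theta^2$, gives the identity $\int_{L_p}(d\theta^{12})^{n-1}\wedge\theta^2 = \int_{L_p}\theta^{12}\wedge(d\theta^{12})^{n-2}\wedge d\theta^2$. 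Combining the two expressions for $BB^1(\F,p)$ thus produces $BB^1(\F,p) = -BB^1(\F,p)$, and consequently $BB^1(\F,p) = 0$, which together with the reduction above completes the proof.

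The main obstacle will be the careful sign bookkeeping in the Alexander isomorphism as set up in Suwa's localization formalism; without the correct orientation convention, the combination of the \v{C}ech--de~Rham expression for $c_{01}$ and the direct integral formula would degenerate to a tautology rather than the required $BB^1 = -BB^1$. Once the sign is pinned down, the Stokes identity on $L_p$ together with the triviality of $\mathcal{N}_2|_{B_p}$ forces the vanishing of the cross--term, and the equality of residues follows from Corollary~\ref{3.10}.
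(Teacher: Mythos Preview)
Your reduction step is correct and matches the paper: specializing Corollary~\ref{3.10} to $k_1=n-1$, $k_2=1$ reduces the statement to the vanishing of the cross--term $BB^1(\mathcal{F},p)$.

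The gap is in your vanishing argument. The two expressions you write down for $BB^1(\mathcal{F},p)$ --- one via the \v{C}ech--de~Rham cocycle with a flat $\nabla_1^2$, the other via the direct integral of Theorem~\ref{c_1} --- are not independent computations of the same number that happen to differ by a sign. They are two presentations of the \emph{same} localized class, and the Stokes identity you invoke on $L_p$ is precisely the mechanism by which the theory identifies them. In other words, the sign $\epsilon$ in the Chern--Simons convention and the orientation sign in the Alexander map are tied together by the requirement that the residue be well-defined; once you fix conventions consistently, the comparison yields the tautology $BB^1=BB^1$, exactly as you feared. The line--bundle triviality of $\mathcal{N}_2|_{B_p}$ only tells you that $d\theta^2$ is exact on $B_p^\ast$, which is already built into the Chern--Simons formalism and gives no extra leverage.

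The paper's proof supplies the missing input from outside the localization machinery: Malgrange's integrating--factor theorem. Because $\mathcal{F}_2$ has codimension one with an isolated singularity at $p$, the local $1$--form $\omega$ defining $\mathcal{F}_2$ can be written as $\omega=f\,dg$ with $f(p)\neq 0$, whence $d\omega=\dfrac{df}{f}\wedge\omega$ and one may take $\theta^2=d(\log f)$. This makes $d\theta^2=0$ \emph{as a form}, so the integrand $\theta^{12}\wedge d\theta^2\wedge(d\theta^{12})^{n-2}$ vanishes identically and $BB^1(\mathcal{F},p)=0$. This is a genuine analytic fact about codimension--one foliations at isolated singularities, and it cannot be extracted from the bundle--theoretic triviality of $\mathcal{N}_2$ alone.
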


\begin{proof} By Corollary \ref{3.10} and hypothesis   $k_{1} = n - 1$ and $k_{2} = 1$ we have
$$ BB^{0}(\mathcal{F}; p) + nBB^{1}(\mathcal{F}; p) = BB(\mathcal{F}_{1}; p). $$
This   and the hypothesis of isolated  singularities   imply that
$$ \mbox{Res}_{c_{1}^{n}}(\mathcal{F}, \mathcal{N}_{12}; p) + \mbox{Res}_{c_{1}^{n -1 }, c_{1}}(\mathcal{F}, \mathcal{N}_{\mathcal{F}}; p) = \mbox{Res}_{c_{1}^{n}}(\mathcal{F}_{1}, \mathcal{N}_{1}; p),$$
where $\displaystyle \mbox{Res}_{c_{1}^{n }, c_{1}}(\mathcal{F}, \mathcal{N}_{\mathcal{F}}; p) = \left(\frac{1}{2 \pi i}\right)^{ n - 1} \int_{L_{p}} \theta^{12} \wedge (d \theta^{2})^{j} \wedge (d \theta^{12})^{n - 1 - j}$

\noi with $\theta^{2}$ an $(1,0)$-form such that if $\omega$ is the $1$-form that induces locally $\mathcal{F}_{2}$, we have
$$ d \omega = \theta^{2} \wedge \omega. $$
By Malgrange, see [\ref{Mal}, Th\'eor\`eme 0.I, pg 163] we have $\omega$ admits an integral factor, i.e., there are holomorphic functions $f$ and $g$ with $f(p) \neq 0$ such that $\omega = fdg$, this implies that
$$ d\omega = df \wedge dg = \frac{df}{f} \wedge (f. dg) = \frac{df}{f} \wedge \omega.$$
Then, we can consider $\displaystyle \theta^{2} = \frac{df}{f} = d (\log f)$. Since this is an exact form,   $d \theta^{2} = 0$ and  \hfill\break 
$ \mbox{Res}_{c_{1}^{n -1 }, c_{1}}(\mathcal{F}, \mathcal{N}_{\mathcal{F}}; p) = 0$. Therefore, the result is proved.

\end{proof}

\begin{example} Let $\mathcal{F} = (\mathcal{F}_{1}, \mathcal{F}_{2})$ be the flag on the manifold $X \subset \mathbb{P}^{5} \times \mathbb{P}^{1}$ of   Example \ref{ex.2.3.8}. By the corollary \ref{coro.2.4.5} we have: 
$$ \mbox{Res}_{c_{1}^{4}}(\mathcal{F}, \mathcal{N}_{12}; p) = \mbox{Res}_{c_{1}^{4}}(\mathcal{F}_{1}, \mathcal{N}_{1}; p),$$
 where $$\displaystyle \mbox{Res}_{c_{1}^{4}}(\mathcal{F}_{1}, \mathcal{N}_{1}; p) = (\frac{1}{2 \pi i})^{4} \int_{T} tr(JX)^{4} \frac{dx\wedge dy \wedge dw \wedge dt}{X_{1}X_{2}X_{3}X_{4}}$$ and $tr(JX)$ denotes the trace of the jacobian of $X$.
 We can check that $tr (JX) = 0$. Therefore, we conclude that
$$ Res_{c_{1}^{4}}(\mathcal{F}, \mathcal{N}_{12};p) = 0. $$

\end{example}

\section{Nash residue for flags and rationality of the residues}

\begin{RCF}\label{2.20} Let $\mathcal{F} = (\mathcal{F}_{1}, \mathcal{F}_{2} )$ be a 2-flag of holomorphic foliations on a complex manifold $M$.
 Let $S$ be a compact connected component of the singular set of the  flag and $\varphi = (\varphi_{1}, \varphi_{2} )$, where $\varphi_{1}$ and  $\varphi_{2}$  are   homogeneous symmetric polynomials, of degrees $d_{1}$ and
$d_{2}$, such that at least one of the inequalities 
$$
d_{1} > n - \dim(\F_{1})+1 \ \ \ or \ \ \ d_{2} > n - \dim(\F_{2})+1
\ \ \ or \ \ \ d_{1} + d_{2}
> n - \dim(\F_{1})+1
$$
is satisfied. 
If $\varphi_{1}$ and  $\varphi_{1}$  have  rational coefficients, then
$$ \mbox{Res}_{\varphi_{1}, \varphi_{2}} (\mathcal{F} , \mathcal{N}_{\mathcal{F}}, S ) \in H_{2n - 2(d_{1} + d_{2} )} (S; \mathbb{Q}).$$

\end{RCF}

In this section, we give  a partial answer for this conjecture. We  will consider
 the Nash modification of flags of holomorphic foliations. For the  ordinary case see [\ref{BB},\ref{BS}]. We analyze  the relation  between the  Nash residue
 and the  Baum-Bott residue.

\subsection{ Nash Contruction}

Let $M$ be a complex manifold of dimension $n$ and $\mathcal{F} =
(\mathcal{F}_{1}, \mathcal{F}_{2})$ an 2-flag of singular
holomorphic foliations of dimensions $(q_{1}, q_{2})$ on $M$. For  each point $x \in M$, we set
$$ F_{i}(x) = \{ v(x) / v \in \mathcal{F}_{i,x}     \} \subset
T_{x}M$$
which  is an $q_{i}$-dimensional subspace if and only if $x \notin
S(\mathcal{F})$, for $i = 1,2.$ Thus we obtain  a flag of subspaces
$F_{1}(x) \subset F_{2}(x) \subset T_{x}M$ for each point $x \in M
\setminus S(\mathcal{F}).$

We will define the flag bundle using the
Grassmann bundles of $q_{i}$-planes. Let $\widetilde{\pi}_{2} :
G_{q_{2}}(TM) \longrightarrow M $ the  Grassmann bundle of $q_{2}$-planes
in $TM$. We have the Nash modification $M_{2}^{\nu} = \overline{Im  \sigma_{2}}$ of $M$ with respect to
$\mathcal{F}_{2}$,
where $\sigma_{2}$ is the  natural section of $\widetilde{\pi}_{2}$  induced by
$\mathcal{F}_{2}$. We have the exact sequence on $M_{2}^{\nu}$:
\begin{equation}\label{eq.3.1}
0 \longrightarrow T_{2}^{\nu} \longrightarrow \pi_{2}^{\ast} TM
\longrightarrow N_{2}^{\nu} \longrightarrow 0.
\end{equation}
Analogously we consider the Grassmann bundle of $q_{1}$-planes 
in $TM$ denoted by $ \widetilde{\pi}_{1} : G_{q_{1}}(TM)
\longrightarrow M $ and we obtain the Nash modification of $M$ with
respect to $\mathcal{F}_{1}, \ \ M_{1}^{\nu} = \overline{Im
\sigma_{1}},$ and the exact sequence on $M_{1}^{\nu}:$
\begin{equation}\label{eq.3.2}
0 \longrightarrow T_{1}^{\nu} \longrightarrow \pi_{1}^{\ast} TM
\longrightarrow N_{1}^{\nu} \longrightarrow 0.
\end{equation}

Now if we consider the Grassmann bundle of $(n-q_{2})$-planes in
$TM$, i.e.,$ \widetilde{\pi}_{n-q_{2}} : G_{n-q_{2}}(TM)
\longrightarrow M $, then we have the exact sequence

$$ 0 \longrightarrow \widetilde{T}_{n - q_{2}}^{\nu} \longrightarrow \widetilde{\pi}_{n - q_{2}}^{\ast}
TM \longrightarrow \widetilde{N}_{n - q_{2}}^{\nu} \longrightarrow
0.$$

\begin{rmk} The fiber of the fibre bundle $N_{n - q_{2}}^{\nu} \longrightarrow
G_{n-q_{2}}(TM)$ over an $(n - q_{2})$-plane
$E_{n-q_{2}} \in G_{n-q_{2}}(TM)$ is the $q_{2}$-plane

$$ (\widetilde{N}_{n - q{2}}^{\nu})_{E_{n-q_{2}}} \simeq \frac{T_{x}M}{E_{n -
q_{2}}} \simeq E_{q_{2}} .$$

\end{rmk}

Le us consider  $ \widetilde{\pi}_{q_{1}} : G_{q_{1}}(\widetilde{N}_{n -
q_{2}}^{\nu}) \longrightarrow G_{n-q_{2}} (TM) $ be the Grassmann
bundle of $q_{1}$-planes in $\widetilde{N}_{n - q_{2}}^{\nu}$, we
have the flag bundle ${\widetilde{\pi}} : F_{q_{1}, q_{2}} (TM)
\longrightarrow M $ of $(q_{1},q_{2})$-planes in $TM$ where
$\widetilde{\pi} = \widetilde{\pi}_{n-q_{2}} \circ
\widetilde{\pi}_{q_{1}}.$

\begin{rmk} A point of $F_{q_{1}, q_{2}}(TM)$ over $x \in M$
means first an $q_{2}$-plane $E_{q_{2}}$ in $T_{x}M$ and then an
$q_{1}$-plane $E_{q_{1}}$ in $E_{q_{2}}$; this is a Flag in
$T_{x}M$. This construction was  motivated by  [\ref{IL}].

\end{rmk}

\begin{definition} We define the Nash modification of $M$ with respect of
the flag $\mathcal{F} = (\mathcal{F}_{1}, \mathcal{F}_{2})$ by
$$ M^{\nu} = \overline{Im \sigma}$$
where the closure is taken over the flag bundle $F_{q_{1},
q_{2}}(TM)$ and $\sigma$ is the natural section  of $\widetilde{\pi}$ induced by the flag
$\mathcal{F}.$

\end{definition}
If we consider the projections ${p}_{i} : F_{q_{1},q_{2}}(TM)
\longrightarrow G_{q_{i}}(TM); \ \ i = 1,2,$ then we can take the
pullback of exact sequences (\ref{eq.3.1}) and (\ref{eq.3.2}) to
$M^{\nu}.$
\begin{equation}\label{eq.3.3}
0 \longrightarrow p_{1}^{\ast}T_{1}^{\nu} \longrightarrow
p_{1}^{\ast}\pi_{1}^{\ast} TM \longrightarrow
p_{1}^{\ast}N_{1}^{\nu} \longrightarrow 0
\end{equation}
\begin{equation}\label{eq.3.4}
0 \longrightarrow p_{2}^{\ast}T_{2}^{\nu} \longrightarrow
p_{2}^{\ast}\pi_{2}^{\ast} TM \longrightarrow
p_{2}^{\ast}N_{2}^{\nu} \longrightarrow 0.
\end{equation}

One has the easy propositions:

\begin{proposition}\label{Prop.1.1.4} The following diagram

$$ \xymatrix{                               & M^{\nu}\ar[dd]^{\pi} \ar[rd]^{p_{2}} \ar[ld]_{p_{1}} &                                \\
             M_{1}^{\nu} \ar[rd]_{\pi_{1}}  &                                                      & M_{2}^{\nu} \ar[ld]^{\pi_{2}}  \\
                                            &         M                                            &                                 \\
}$$

\noi is commutative.

\end{proposition}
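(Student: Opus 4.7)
My plan is to reduce the commutativity of the diagram to a tautological identity at the level of the ambient flag bundle $F_{q_1,q_2}(TM)$ and the Grassmann bundles $G_{q_i}(TM)$, and then to transfer that identity to the Nash modifications by recognizing $p_1,p_2,\pi,\pi_1,\pi_2$ as restrictions of the ambient projections.

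First I would check the ambient identity
\[
\widetilde{\pi}_i \circ \widetilde{p}_i \;=\; \widetilde{\pi},\qquad i=1,2,
\]
as maps $F_{q_1,q_2}(TM)\to M$. This is immediate from the construction: a point of $F_{q_1,q_2}(TM)$ lying over $x\in M$ is by definition a flag in $T_xM$, and each of $\widetilde{\pi}$, $\widetilde{\pi}_1\circ \widetilde{p}_1$, $\widetilde{\pi}_2\circ \widetilde{p}_2$ sends it back to the base point $x$, since both $\widetilde{p}_i$ and $\widetilde{\pi}_i$ preserve the underlying point of $M$.

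Next I would verify that on the regular locus $M\setminus S(\mathcal{F})$ the section $\sigma$ assigns to $x$ the flag $(F_1(x)\subset F_2(x))$, whereas $\sigma_i$ assigns the single plane $F_i(x)$; by construction this yields $\widetilde{p}_i\circ\sigma=\sigma_i$ on $M\setminus S(\mathcal{F})$. Because $\widetilde{p}_i$ is continuous (indeed a proper holomorphic map), taking closures gives
\[
\widetilde{p}_i(M^\nu) \;=\; \widetilde{p}_i\bigl(\overline{\operatorname{Im}\sigma}\bigr)\;\subset\; \overline{\widetilde{p}_i(\operatorname{Im}\sigma)}\;=\;\overline{\operatorname{Im}\sigma_i}\;=\;M_i^\nu.
\]
Therefore the restrictions $p_i := \widetilde{p}_i|_{M^\nu}\colon M^\nu \to M_i^\nu$ and $\pi := \widetilde{\pi}|_{M^\nu}\colon M^\nu \to M$ are well defined, and restricting the ambient identity of the first step to $M^\nu$ gives exactly $\pi_i\circ p_i = \pi$ for $i=1,2$.

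The only real subtlety, and my expected main obstacle, is the verification that $p_i$ actually lands inside $M_i^\nu$ and not merely inside the ambient Grassmann bundle $G_{q_i}(TM)$; this is precisely what is resolved by continuity of $\widetilde{p}_i$ combined with the identity $\widetilde{p}_i\circ \sigma = \sigma_i$ on the dense open set $M\setminus S(\mathcal{F})$. Once this point is clarified, the remainder is a pure restriction argument with no additional content.
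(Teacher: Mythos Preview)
Your argument is correct. The paper states this proposition without proof, treating it as evident from the construction of the flag bundle and the Nash modifications; your write-up supplies exactly the details the paper omits, namely the tautological identity $\widetilde{\pi}_i\circ\widetilde{p}_i=\widetilde{\pi}$ on the ambient flag bundle together with the density/continuity argument showing that $\widetilde{p}_i$ carries $M^\nu$ into $M_i^\nu$. There is nothing to compare against beyond this, and your identification of the only nontrivial point (that $p_i$ lands in $M_i^\nu$ rather than just in $G_{q_i}(TM)$) is apt.
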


\begin{proposition}\label{Prop.1.1.5} On $M^{\nu}$ we have the exact sequences

\begin{equation}\label{eq.3.5}
0 \longrightarrow N_{12}^{\nu} \longrightarrow
p_{1}^{\ast}N_{1}^{\nu} \longrightarrow p_{2}^{\ast}N_{2}^{\nu}
\longrightarrow 0
\end{equation}

\begin{equation}\label{eq.3.6}
0 \longrightarrow p_{1}^{\ast}T_{1}^{\nu} \longrightarrow
p_{2}^{\ast}T_{2}^{\nu} \longrightarrow N_{12}^{\nu} \longrightarrow
0
\end{equation}

\noi where $N_{12}^{\nu} := p_{2}^{\ast}T_{2}^{\nu} /
p_{1}^{\ast}T_{1}^{\nu}.$

\end{proposition}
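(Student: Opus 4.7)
The plan is to reduce both exact sequences to a single structural fact: that on $M^\nu$ there is a canonical inclusion of vector subbundles $p_1^\ast T_1^\nu \hookrightarrow p_2^\ast T_2^\nu$, compatible with their respective inclusions into $\pi^\ast TM$. Granted this, sequence (\ref{eq.3.6}) is just the defining quotient presentation of $N_{12}^\nu$, and (\ref{eq.3.5}) then follows from the third isomorphism theorem applied to the chain $p_1^\ast T_1^\nu \subset p_2^\ast T_2^\nu \subset \pi^\ast TM$.

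To establish the key inclusion, I would argue directly from the construction of the flag bundle. By definition, $F_{q_1,q_2}(TM)$ has as its fiber over $x \in M$ the variety of nested pairs $(E_{q_1}, E_{q_2})$ with $E_{q_1} \subset E_{q_2} \subset T_xM$, so $F_{q_1,q_2}(TM)$ carries two tautological subbundles of $\widetilde{\pi}^\ast TM$, one contained in the other. Their pushdowns along the two projections $\widetilde{p}_i$ to $G_{q_i}(TM)$ are precisely the tautological bundles whose restrictions to $M_i^\nu$ are $T_i^\nu$. Restricting to $M^\nu = \overline{\mathrm{Im}\,\sigma} \subset F_{q_1,q_2}(TM)$ and using Proposition \ref{Prop.1.1.4} to identify $p_1^\ast \pi_1^\ast TM$ and $p_2^\ast \pi_2^\ast TM$ with the common pullback $\pi^\ast TM$, one obtains a chain $p_1^\ast T_1^\nu \subset p_2^\ast T_2^\nu \subset \pi^\ast TM$ of subbundles. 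Over $M \setminus S(\mathcal{F})$ this is just the flag condition $F_1(x) \subset F_2(x) \subset T_xM$, and since $M^\nu$ was defined as the closure inside the flag variety---whose points by construction parametrize nested subspaces---the inclusion automatically persists across the exceptional locus.

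With the inclusion in hand, setting $N_{12}^\nu := p_2^\ast T_2^\nu / p_1^\ast T_1^\nu$ yields (\ref{eq.3.6}) tautologically. For (\ref{eq.3.5}), the inclusion $p_2^\ast T_2^\nu \hookrightarrow \pi^\ast TM$ descends to an injection $N_{12}^\nu = p_2^\ast T_2^\nu / p_1^\ast T_1^\nu \hookrightarrow \pi^\ast TM / p_1^\ast T_1^\nu = p_1^\ast N_1^\nu$ on $M^\nu$, and the cokernel of this map is canonically $\pi^\ast TM / p_2^\ast T_2^\nu = p_2^\ast N_2^\nu$ by the third isomorphism theorem, giving the desired short exact sequence.

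The main delicate point is ensuring that $p_1^\ast T_1^\nu$ and $p_2^\ast T_2^\nu$ remain locally free subbundles of $\pi^\ast TM$ with the claimed inclusion over the exceptional fibers of $\pi$. This is precisely what the construction of $M^\nu$ inside the flag bundle $F_{q_1,q_2}(TM)$ accomplishes, as opposed to working with a naive fibered product of the separate Nash modifications $M_1^\nu$ and $M_2^\nu$, where nestedness might a priori be lost in the limit. Once this geometric point is secured, the remainder of the argument is a purely formal application of the third isomorphism theorem.
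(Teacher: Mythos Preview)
Your argument is correct, and in fact the paper does not supply a proof of this proposition at all: it is simply stated and then used. So there is nothing to compare against; you have filled in what the authors left implicit.

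Your identification of the essential point is exactly right. The construction of $M^\nu$ as a closure inside the flag bundle $F_{q_1,q_2}(TM)$ (rather than inside a product of Grassmannians) is precisely what guarantees that the tautological inclusion $p_1^\ast T_1^\nu \subset p_2^\ast T_2^\nu$ persists over the exceptional locus, and once that chain of subbundles inside $\pi^\ast TM$ is in place, sequence~(\ref{eq.3.6}) is the definition of $N_{12}^\nu$ and sequence~(\ref{eq.3.5}) is the third isomorphism theorem. One small remark: for $p_i^\ast T_i^\nu$ to make sense you implicitly use that $\widetilde{p}_i(M^\nu) \subset M_i^\nu$, which follows because $\widetilde{p}_i \circ \sigma = \sigma_i$ on $M \setminus S(\mathcal{F})$ and closures are preserved under continuous maps; this is essentially the content of Proposition~\ref{Prop.1.1.4}, which you cite.
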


The  Proposition \ref{Prop.1.1.4} and Proposition \ref{Prop.1.1.5}  imply  that $p_{1}^{\ast}\pi_{1}^{\ast} TM =
p_{2}^{\ast}\pi_{2}^{\ast} TM = \pi^{\ast}TM$. Therefore, we have the following diagram on $M^{\nu}$.
$$ \xymatrix{ 0 \ar[rd]  &  & 0 \ar[ld] &  & 0  \\
   & p_{1}^{\ast}T_{1}^{\nu} \ar[rd] \ar[dd] &  & p_{2}^{\ast}N_{2}^{\nu} \ar[lu] \ar[ru] &  \\
   &  & \pi^{\ast}TM \ar[ru] \ar[rd]  &  &  \\
   & p_{2}^{\ast}T_{2}^{\nu} \ar[ru]   \ar[rd] &  & p_{1}^{\ast}N_{1}^{\nu} \ar[uu]  \ar[rd] &  \\
 0 \ar[ru] &  & N_{12}^{\nu} \ar[ru] \ar[rd]  &  & 0 \\
  & 0 \ar[ru] &   &  0 &  }$$
We define the normal bundle $N^{\nu}$
over $M^{\nu}$ by $N_{12}^{\nu} \oplus p_{2}^{\ast}N_{2}^{\nu}$ and also we define

$$ \varphi(N^{\nu}) := \varphi_{1} (N_{12}^{\nu}) \smile
\varphi_{2}(p_{2}^{\ast} N_{2}^{\nu}),$$

\noi where $\varphi_{i}$ is a homogeneous symmetric polynomial of degree $d_{i}$.

Let $S$ be a compact connected component of $S(\mathcal{F})$ and let $S^{\nu} = \pi^{-1}(S)$. Also, let $U^{\nu}$ be a neighborhood of $S^{\nu}$
in $M^{\nu}$ disjoint from the other components of $S(\mathcal{F})^{\nu}$. Let $\widetilde{U}_{1}^{\nu}$ be a regular neighborhood of $S^{\nu}$ in $F_{q_{1},q_{2}}(TM)$ with $\widetilde{U}_{1}^{\nu} \cap M^{\nu} \subset U^{\nu}$ and $\widetilde{U}_{0}^{\nu}$ a tubular neighborhood of $U_{0}^{\nu} = U^{\nu} \setminus S^{\nu}$ in $F_{q_{1},q_{2}}(TM)$.
%with the projection $\rho$. 
We consider the covering $\mathcal{\widetilde{U}}^{\nu} = \{ \widetilde{U}_{0}^{\nu}, \widetilde{U}_{1}^{\nu} \}$ of $\widetilde{U}^{\nu} = \widetilde{U}_{0}^{\nu} \cup \widetilde{U}_{1}^{\nu}$. The characteristic class $\varphi(N^{\nu})$ is represented by the cocycle
$$\varphi(^{12}_{2}\nabla_{\ast}^{\nu}) =
\varphi_{1}(^{12}\nabla_{\ast}^{\nu}) \smile
\varphi_{2}(^{2}\nabla_{\ast}^{\nu}) \in
A^{2(d_{1}+d_{2})}(\widetilde{\mathcal{U}}^{\nu}),$$
 where
$$\varphi_{1}(^{12}\nabla_{\ast}^{\nu}) =
(\varphi_{1}(^{12}\nabla_{0}^{\nu}),
\varphi_{1}(^{12}\nabla_{1}^{\nu}),
\varphi_{1}(^{12}\nabla_{0}^{\nu},^{12}\nabla_{1}^{\nu}))$$

\noi and $\varphi_{2}(^{2}\nabla_{\ast}^{\nu}) =
(\varphi_{2}(^{2}\nabla_{0}^{\nu}),
\varphi_{2}(^{2}\nabla_{1}^{\nu}),
\varphi_{2}(^{2}\nabla_{0}^{\nu},^{2}\nabla_{1}^{\nu})   ).$ \\

Here $^{12}\nabla_{0}^{\nu} \ \mbox{and} \ ^{12}\nabla_{1}^{\nu}$ are
connections on the bundle $N_{12}^{\nu}$ over $\widetilde{U}_{0}^{\nu} \ \ \mbox{and} \ \
\widetilde{U}_{1}^{\nu}$ respectively, and $^{2}\nabla_{0}^{\nu}, ^{2}\nabla_{1}^{\nu}$ are connections on the bundle 
$p_{2}^{\ast}N_{2}^{\nu}$ over $\widetilde{U}_{0}^{\nu} \ \ \mbox{and} \ \
\widetilde{U}_{1}^{\nu}$ respectively.  \\

If we set $U = \pi(U^{\nu})$, which is  a neighborhood of $S$ on $M$, then $\pi$ induces a biholomorphic map $U_{0}^{\nu} \longrightarrow U_{0} = U\setminus S $. Over  $U_{0}$ we have the  basic (Bott) connections $\nabla_{12}$ and
$\nabla_{2}$ on $N_{12}$ and $N_{F_{2}^{0}}$ respectively. Let us consider  $^{12}\nabla_{0}^{\nu}$ the connection for $N_{12}^{\nu}$ given by  $^{12}\nabla_{0}^{\nu} = \pi^{\ast}( \nabla_{12})$ and analogously $^{2}\nabla_{0}^{\nu}= \pi^{\ast}( \nabla_{2})$ for $p_{2}^{\ast}N_{2}^{\nu}$ then
$$ \varphi(^{12}_{2}\nabla_{0}^{\nu}) =
\varphi_{1}(^{12}\nabla_{0}^{\nu}) \smile
\varphi_{2}(^{2}\nabla_{0}^{\nu}) = 0. $$
The cocycle  $ \varphi(^{12}_{2}\nabla_{\ast}^{\nu}) \in A^{2(d_{1}+d_{2})} (\widetilde{\mathcal{U}}^{\nu},
\widetilde{U}_{0}^{\nu})$ defines a class 
$$\varphi_{S^{\nu}} (N^{\nu};\mathcal{F}) \in H^{2(d_{1}+d_{2})}
(U^{\nu}, U^{\nu}\setminus S^{\nu}; \mathbb{C}  ).$$ Its image by Alexander isomorphism 
$H^{2(d_{1}+d_{2})}
(U^{\nu}, U^{\nu}\setminus S^{\nu}; \mathbb{C}  ) \to H_{2n -
2(d_{1} + d_{2})} (S^{\nu} ; \mathbb{C})$ is denoted by
Res$_{\varphi_{1},\varphi_{2}}(N^{\nu},\mathcal{F},S^{\nu})$.

\begin{definition} The above class Res$_{\varphi_{1},\varphi_{2}}(N^{\nu},\mathcal{F},S^{\nu})$ is called  the Nash residue  at $S^{\nu}$ of the flag $\mathcal{F}$ with respect to $\varphi = (\varphi_{1},\varphi_{2})$.
\end{definition}

\subsection{Comparison of Baum-Bott and Nash residues for flags}

In this section, we compare the previously defined Nash residue for flags with the Baum-Bott residue for flags.  The result is
generalization of the one obtained in [\ref{BS}]. This comparison implies a partial answer to the rationality conjecture for flags.

Let $M$ be a complex manifold of dimension $n$ and $\mathcal{F} =
(\mathcal{F}_{1}, \mathcal{F}_{2})$ be a 2-flag of singular
holomorphic foliations of dimensions $(q_{1}, q_{2})$ on $M$. Also
let $S $ be a compact connected component of $S(\F)$ and denote 
$S^{\nu} = \pi^{-1}(S)$ as above. Then there is a canonical
homomorphism
$$ \pi_{\ast} : H_{2n - 2d}(S^{\nu}; \mathbb{C}) \longrightarrow H_{2n - 2d}(S;
\mathbb{C}).$$

\begin{thm} Let $\varphi = (\varphi_{1}, \varphi_{2})$ be homogeneous symmetric polynomials of degree
$d_{1}$ and $d_{2}$ respectively, satisfying the condition $(1.1)$ of the Bott vanishing theorem for flags. If
$\varphi_{i}$ has integral coefficients, then the difference
$$\mbox{Res}_{\varphi_{1},\varphi_{2}}(\mathcal{N}_{\mathcal{F}},\mathcal{F}, S) -  \pi_{\ast}\mbox{Res}_{\varphi_{1},\varphi_{2}}(N^{\nu},\mathcal{F},
S^{\nu}) $$
 is in the image of the canonical homomorphism $H_{2n - 2d}(S;
\mathbb{Z}) \longrightarrow H_{2n - 2d}(S; \mathbb{C})$, i.e., is an integral class.

\end{thm}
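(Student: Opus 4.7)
The plan is to adapt the Brasselet–Suwa comparison strategy for ordinary foliations to the flag setting, pulling all comparisons up to $M^\nu$ where the Nash bundles are genuine vector bundles. Let $U$ be a relatively compact neighborhood of $S$ as in Theorem \ref{2.16} and choose real analytic resolutions $E_\bullet^{12} \to \mathcal{N}_{12}$ and $E_\bullet^2 \to \mathcal{N}_2$ over $U$. Pulling back by $\pi$, on $U^\nu = \pi^{-1}(U)$ we obtain complexes $\pi^*E_\bullet^{12}$ and $\pi^*E_\bullet^2$ which, on $U_0^\nu$, are resolutions of the honest vector bundles $N_{12}^\nu|_{U_0^\nu}$ and $p_2^*N_2^\nu|_{U_0^\nu}$ (recall $\pi\colon U_0^\nu \to U_0$ is a biholomorphism and the sheaves $\mathcal{N}_{12}, \mathcal{N}_2$ are locally free there).

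The next step is to set up a single Čech–de Rham cocycle on the covering $\{\widetilde U_0^\nu, \widetilde U_1^\nu\}$ that simultaneously computes both residues. On $\widetilde U_0^\nu$, I would take $^{12}\nabla_0^\bullet$ and $^2\nabla_0^\bullet$ to be families of connections on the pullback resolutions that are compatible with $\pi^*\nabla_{12}$ and $\pi^*\nabla_2$ (the lifted basic/Bott connections) in the sense of Suwa, so that $\varphi(^{12}_2\nabla_0^\bullet) = 0$ by the flag Bott vanishing Theorem \ref{2.15}. On $\widetilde U_1^\nu$, I would take $^{12}\nabla_1^\bullet$ and $^2\nabla_1^\bullet$ to extend fixed connections $\widetilde\nabla^{12}$ on $N_{12}^\nu$ and $\widetilde\nabla^2$ on $p_2^*N_2^\nu$ via the augmentation maps of the pullback resolution. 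Two different cocycles then arise from the same data: the Baum–Bott cocycle (using virtual bundles $\pi^*\xi^{12}, \pi^*\xi^2$ throughout) and the Nash cocycle (using the honest bundles $N_{12}^\nu, p_2^*N_2^\nu$ on $\widetilde U_1^\nu$), and their images under $\pi_*\circ Al$ compute the two residues.

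The difference of these cocycles, by the product formula for $\varphi_1 \smile \varphi_2$ and additivity of $\varphi_i$ in each factor, reduces to a sum of terms of the shape $\varphi_i(\pi^*\xi^i) - \varphi_i(N^\nu_\bullet)$ where both the virtual and the honest bundles restrict to the same bundle on $U_0^\nu$. Each such difference is thus represented by a relative characteristic cocycle of a \emph{genuine} $K$-theoretic element (the formal difference $\pi^*\xi^i - N^\nu_\bullet \in K(U^\nu, U_0^\nu)$, well defined because the augmentation is an isomorphism over $U_0^\nu$). Since $\varphi_i$ has integer coefficients, this relative characteristic class lies in $H^{2d}(U^\nu, U_0^\nu; \mathbb{Z})$, and applying the Alexander isomorphism and $\pi_*$ lands us in the image of $H_{2n-2d}(S;\mathbb{Z}) \to H_{2n-2d}(S;\mathbb{C})$.

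The main obstacle I expect is the bookkeeping in the second step: because the flag residue is a cup product, the difference cocycle mixes contributions coming from the two resolutions, and one must check carefully that each mixed term $\varphi_1(^{12}\nabla_*^\bullet) \wedge \varphi_2(^2\nabla_*^\bullet)$ splits cleanly as a characteristic class of an integral virtual bundle on $(U^\nu, U_0^\nu)$, rather than only as a complex class modulo torsion. Once the splitting is verified, integrality is automatic from standard Chern–Weil theory applied to genuine bundles, and the required partial answer to the Rationality Conjecture for Flags follows by tensoring with $\mathbb{Q}$.
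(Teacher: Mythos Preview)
Your overall strategy---pull everything up to $M^\nu$, compare the pulled-back virtual bundle $\pi^*\xi$ with the Nash normal bundle, and identify the difference as an integral relative class---is the Brasselet--Suwa approach the paper follows. The gap is in your second step. You want connections on $\widetilde U_1^\nu$ that ``extend fixed connections $\widetilde\nabla^{12}$ on $N_{12}^\nu$ and $\widetilde\nabla^2$ on $p_2^*N_2^\nu$ via the augmentation maps of the pullback resolution,'' but those augmentation maps $\pi^*E_0^{12}\to N_{12}^\nu$ and $\pi^*E_0^{2}\to p_2^*N_2^\nu$ exist only over $U_0^\nu$: the sheaves $\mathcal N_{12},\mathcal N_2$ are not locally free on $S$, so there is no bundle map to pull back across $S^\nu$. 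Without a global augmentation on $U^\nu$ you cannot align the two families of connections on $\widetilde U_1^\nu$, and the difference cocycle is not yet the relative Chern--Weil class of anything. Saying $\pi^*\xi^{i}-N_\bullet^\nu\in K(U^\nu,U_0^\nu)$ does not rescue this: that element lies in the kernel of $K(U^\nu)\to K(U_0^\nu)$, but a \emph{specific} lift (i.e.\ a complex on $U^\nu$ exact on $U_0^\nu$) is exactly what is needed to define and compare the localized classes, and your resolution does not provide one.

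The paper supplies the missing augmentation by resolving $\mathcal F_1$ and $\mathcal F_2$ rather than $\mathcal N_{12}$ and $\mathcal N_2$ directly. This forces the degree-zero terms of the induced resolutions of $\mathcal N_{12}$ and $\mathcal N_2$ to be $F_2^0$ and $TM$, and after pulling back one proves the key inclusion $\mathrm{Im}(\pi^*h_1^{12})\subset p_1^*T_1^\nu$ and $\mathrm{Im}(\pi^*h_1^{2})\subset p_2^*T_2^\nu$ on all of $U^\nu$ (true on the dense open $U_0^\nu$, then by continuity). This yields honest complexes on $U^\nu$ with quotients $N_{12}^\nu$ and $p_2^*N_2^\nu$, so the virtual differences $\widetilde\varepsilon_{12}=\pi^*\xi^{12}-N_{12}^\nu$ and $\widetilde\varepsilon_{2}=\pi^*\xi^{2}-p_2^*N_2^\nu$ are genuine on $U^\nu$ and trivial on $U_0^\nu$. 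One then expands $\varphi_1(\pi^*\xi^{12})\smile\varphi_2(\pi^*\xi^{2})$ as $\varphi_1(N_{12}^\nu)\smile\varphi_2(p_2^*N_2^\nu)$ plus cross terms containing factors $\psi_k^i(\widetilde\varepsilon_\bullet)$ with integral $\psi_k^i$ of positive degree, and checks this identity holds at the cocycle level up to a $D\tau$ (the ``good localization''). Your product/additivity remark is the right instinct for this last step, but it only becomes rigorous once the augmentation problem above is solved.
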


\begin{proof}  Let us consider  analytic resolutions of the
sheaves $\mathcal{F}_{1}$ and $\mathcal{F}_{2}$
$$ 0 \longrightarrow A_{U} (E_{q}^{12}) \longrightarrow \cdots \longrightarrow A_{U} (E_{1}^{12})
 \longrightarrow A_{U} \otimes \mathcal{F}_{1} \longrightarrow 0$$
$$ 0 \longrightarrow A_{U} (E_{r}^{2}) \longrightarrow \cdots \longrightarrow A_{U} (E_{1}^{2})
 \longrightarrow A_{U} \otimes \mathcal{F}_{2} \longrightarrow 0.$$
 The exact sequences

$$ \xymatrix{
           &                         & 0 \ar[d]                       &                          &      \\
 0 \ar[r]  & \mathcal{F}_{1} \ar[r]  & \mathcal{F}_{2} \ar[r] \ar[d]  & \mathcal{N}_{1,2} \ar[r]  & 0  \\
           &                         & \Theta_{M} \ar[d]              &                          &  \\
           &                         & \mathcal{N}_{2} \ar[d]         &                          &  \\
           &                         & 0                              &                          &  }$$

\noi provide  resolutions of the sheaves $\mathcal{N}_{1,2} \ \
\mbox{and} \ \ \mathcal{N}_{2}:$

$$\displaystyle 0 \longrightarrow A_{U} (E_{q}^{12}) \stackrel{\eta_{q}^{12}}{\longrightarrow} \cdots \longrightarrow A_{U} (E_{1}^{1})
\stackrel{\eta_{1}^{12}}{\longrightarrow}A_{U} (F_{2}^{0}) \longrightarrow   A_{U} \otimes \mathcal{N}_{1,2} \longrightarrow 0$$

$$\displaystyle 0 \longrightarrow A_{U} (E_{q}^{2}) \stackrel{\eta_{r}^{2}}{\longrightarrow} \cdots \longrightarrow A_{U} (E_{1}^{2})
\stackrel{\eta_{1}^{2}}{\longrightarrow}A_{U} (TM) \longrightarrow   A_{U} \otimes \mathcal{N}_{2} \longrightarrow 0.$$
and then  exact sequences of vector bundles on $U_{0}$:

\begin{equation}\label{eq.3.7}
0 \longrightarrow E_{q}^{12} \longrightarrow \cdots \longrightarrow
E_{1}^{12} \longrightarrow F_{2}^{0} \longrightarrow N_{1,2}
\longrightarrow 0
\end{equation}

\begin{equation}\label{eq.3.8}
0 \longrightarrow E_{r}^{2} \longrightarrow \cdots \longrightarrow
 E_{1}^{2} \longrightarrow TM \longrightarrow N_{F_{2}^{0}}
\longrightarrow 0.
\end{equation}

The sheaves homomorphisms $\eta_{j}^{12}$ and $\eta_{i}^{2}$
induce bundle homomorphisms on $U$ and $U^{\nu}$

$$ h_{j}^{12} : E_{j}^{12} \longrightarrow E_{j-1}^{12} $$

$$ h_{i}^{2} : E_{i}^{2} \longrightarrow E_{i-1}^{2} $$

$$ \pi^{\ast}h_{j}^{12} : \pi^{\ast}E_{j}^{12} \longrightarrow \pi^{\ast}E_{j-1}^{12} $$

$$ \pi^{\ast}h_{i}^{2} : \pi^{\ast}E_{i}^{2} \longrightarrow \pi^{\ast}E_{i-1}^{2}. $$

We claim that

\begin{equation}\label{eq. 19}
Im(\pi^{\ast}h_{1}^{2}) \subset
p_{2}^{\ast}T_{2}^{\nu} \ \ \mbox{and} \ \ Im(\pi^{\ast}h_{1}^{12}) \subset
p_{1}^{\ast}T_{1}^{\nu} \ \ \mbox{on} \ \ U^{\nu}.
\end{equation}

In fact, away from the singular sets we have exact 
sequences

$$ 0 \longrightarrow \mathcal{F}_{1} \longrightarrow \mathcal{F}_{2}
\longrightarrow \mathcal{N}_{1,2}\longrightarrow 0 $$

$$ 0 \longrightarrow p^{\ast}_{1}T_{1}^{\nu} \longrightarrow p^{\ast}_{2}T_{2}^{\nu}
\longrightarrow N_{1,2}^{\nu}\longrightarrow 0. $$

Note that $T_{1}^{\nu} = \pi_{1}^{\ast}F_{1}^{0}$ on $M_{1}^{\nu}$
 implies that we have  $p^{\ast}_{1}T_{1}^{\nu} = \pi^{\ast}F_{1}^{0}$.
Analogously we have $p^{\ast}_{2}T_{2}^{\nu} = \pi^{\ast}F_{2}^{0}.$ \\

Then, we obtain  the exact sequences
$$ \xymatrix{   &             &       \vdots \ar[d]       &                &                 \\
           &                                 & \pi^{\ast}E_{1}^{12} \ar^{\pi^{\ast}h_{1}^{12}}[d]               &                                &      \\
 0 \ar[r]  & \pi^{\ast}F_{1}^{0} \ar[r]  & \pi^{\ast}F_{2}^{0} \ar[r]                     & \pi^{\ast}N_{1,2} \ar[r]  &   0.   }$$

Therefore, away from the singular sets, which are dense in
$U^{\nu}$, we have the inclusions  in (\ref{eq. 19}). So by   continuity arguments we
have the inclusions of (\ref{eq. 19}) in $U^{\nu}$. Then we have two complexes of vector bundles on $U^{\nu}$,
which are exact on $U_{0}^{\nu}.$

\begin{equation}\label{eq.3.9}
0 \longrightarrow \pi^{\ast}(E_{q}^{12}) \longrightarrow ... \longrightarrow
\pi^{\ast}(E_{1}^{12}) \longrightarrow \pi^{\ast}F_{2}^{0}
\longrightarrow N_{1,2}^{\nu} \longrightarrow 0
\end{equation}

\begin{equation}\label{eq.3.10}
0 \longrightarrow \pi^{\ast}(E_{r}^{2}) \longrightarrow ... \longrightarrow
\pi^{\ast}(E_{1}^{2}) \longrightarrow \pi^{\ast}TM \longrightarrow
p_{2}^{\ast}N_{2}^{\nu} \longrightarrow 0.
\end{equation}

Let us denote  the virtual bundles $\widetilde{\varepsilon}_{12}
= \widetilde{\pi}^{\ast}(\xi^{12}) - N_{12}^{\nu}$ \ \ and \ \
$\widetilde{\varepsilon}_{2} = \widetilde{\pi}^{\ast}(\xi^{2}) -
p_{2}^{\ast}N_{2}^{\nu}$.

By classical properties of characteristic classes , we can  write
\begin{equation}\label{eq.3.11}
\varphi_{1}( \widetilde{\pi}^{\ast}(\xi^{12})) = \varphi_{1}(
N_{12}^{\nu}) + \sum \varphi_{1}^{i}(N_{12}^{\nu})
\psi_{1}^{i}(\widetilde{\varepsilon}_{12}),
\end{equation}
where the $\varphi_{1}^{i}$ are symmetric polynomials  with
integral coefficients and $\psi_{1}^{i}$ are symmetric polynomials
with integral coefficients without constant term. Analogously

\begin{equation}\label{eq.3.12}
\varphi_{2}( \widetilde{\pi}^{\ast}(\xi^{2})) = \varphi_{2}(
p_{2}^{\ast}N_{2}^{\nu}) + \sum
\varphi_{2}^{i}(p_{2}^{\ast}N_{2}^{\nu})
\psi_{2}^{i}(\widetilde{\varepsilon}_{2}).
\end{equation}
By taking the cap product of $(\ref{eq.3.11})$ with $(\ref{eq.3.12})$ we have \\

\noi $ \varphi_{1}( \widetilde{\pi}^{\ast}\xi^{12}) . \varphi_{2}(
\widetilde{\pi}^{\ast}\xi^{2}) = \varphi_{1}( N_{12}^{\nu}). \varphi_{2}(p_{2}^{\ast}
N_{2}^{\nu}) + \varphi_{1}( N_{1,2}^{\nu}). \sum
\varphi_{2}^{i}(p_{2}^{\ast}N_{2}^{\nu})
\psi_{2}^{i}(\widetilde{\varepsilon}_{2})  + $  \\

\noi $+ \sum \varphi_{1}^{i}(N_{1,2}^{\nu})
\psi_{1}^{i}(\widetilde{\varepsilon}_{1}) . \varphi_{2}(
p_{2}^{\ast}N_{2}^{\nu}) + \sum \varphi_{1}^{i}(N_{12}^{\nu})
\psi_{1}^{i}(\widetilde{\varepsilon}_{12}) .
\varphi_{2}^{i}(p_{2}^{\ast}N_{2}^{\nu})
\psi_{2}^{i}(\widetilde{\varepsilon}_{2}).$   \\

\noi oi $H^{2(d_{1} + d_{2})}(U^{\nu}).$  \\

\noi We claim that we have a ``good localization", i.e., in
$A^{\ast}(\widetilde{U}^{\nu}, \widetilde{U}^{\nu}_{0})$ we have \\

\noi $ \varphi(
\widetilde{\pi}^{\ast}(^{12}_{2}\nabla_{\ast}^{\bullet}) ) =
\varphi_{1}( \widetilde{\pi}^{\ast}(^{12}\nabla_{\ast}^{\bullet})) .
\varphi_{2}( \widetilde{\pi}^{\ast}(^{2}\nabla_{\ast}^{\bullet})) =
$    \\

\noi $= \varphi_{1}(^{12}\nabla^{\nu}_{\ast}). \varphi_{2}(
^{2}\nabla^{\nu}_{\ast}) + \varphi_{1}(^{12}\nabla^{\nu}_{\ast}).
\sum \varphi_{2}^{i}(^{2}\nabla^{\nu}_{\ast})
\psi_{2}^{i}(^{2}\nabla^{\varepsilon}_{\ast})  + $  \\

\noi $+ \sum \varphi_{1}^{i}(^{12}\nabla^{\nu}_{\ast})
\psi_{1}^{i}(^{12}\nabla^{\varepsilon}_{\ast}) . \varphi_{2}(
^{2}\nabla^{\nu}_{\ast}) + \sum
\varphi_{1}^{i}(^{12}\nabla^{\nu}_{\ast})
\psi_{1}^{i}(^{12}\nabla^{\varepsilon}_{\ast}) .
\varphi_{2}^{i}(^{2}\nabla^{\nu}_{\ast})
\psi_{2}^{i}(^{2}\nabla^{\varepsilon }_{\ast}) + D\tau,$ \\

\noi where $\tau = (0,0, \tau_{01})$   with $\tau_{01} = \varphi_{1}(^{12}\nabla_{0}^{\nu}).
^{2}\tau_{01} + ^{12}\tau_{01} .\varphi_{2}(^{2}\nabla_{1}^{\nu}) +
^{12}\tau_{01} .\sum \varphi_{2}^{i}(^{2}\nabla_{1}^{\nu}).
\psi_{2}^{i}(^{2}\nabla_{1}^{\varepsilon})$  \\

\noi For further details on $^{2}\tau_{01}$ and $^{12}\tau_{01}$, we refer to
[\ref{BS}].  \\

\noi The above claim shows that, in
$H^{2(d_{1}+d_{2})}(U^{\nu},U^{\nu} \backslash S^{\nu} ,\mathbb{C}
)$  we have 

\noi$\pi^{\ast} \varphi_{S}(\mathcal{N}_{\mathcal{F}}, \mathcal{F},)
= \varphi_{S^{\nu}}(N^{\nu},\mathcal{F}) + \sum
\varphi_{1}(N_{1,2}^{\nu}). \varphi_{2}^{i}(p_{2}^{\ast}N_{2}^{\nu}).
\psi_{2,S}^{i}(\varepsilon_{2}) + $  \\

\noi $+ \sum \varphi_{1}^{i}(N_{1,2}^{\nu}).
\psi_{1,S}^{i}(\varepsilon_{12}).
\varphi_{2}(p_{2}^{\ast}N_{2}^{\nu}) + \sum
\varphi_{1}^{i}(N_{1,2}^{\nu}). \psi_{1,S}^{i}(\varepsilon_{12}).
\varphi_{2}^{i}(p_{2}^{\ast}N_{2}^{\nu}).
\psi_{2,S}^{i}(\varepsilon_{2}) .$   \\

\noi Then by the commutative diagram (where $A$ denotes the respective  Alexander isomorphisms)
$$ \xymatrix{
  H^{2(d_{1}+ d_{2})}(U^{\nu}, U^{\nu} \setminus S^{\nu}, \mathbb{C} )  \ar[d]^{A}  & H^{2(d_{1}+ d_{2})}(U, U \setminus S, \mathbb{C} ) \ar[l]_{\pi^\ast} \ar[d]^{A}   \\
  H_{2n -2(d_{1}+ d_{2})}(S^{\nu}, \mathbb{C} )   \ar[r]^{\pi^\ast}                        & H_{2n -2(d_{1}+ d_{2})}(S, \mathbb{C} )                          }$$

We obtain that the difference between these residues in $ H_{2n
-2(d_{1}+ d_{2})}(S, \mathbb{C} )$ is an integral classes.

\end{proof}

\begin{corollary} Let us consider the symmetric  polynomials   $\varphi_{1} = c_{i_{1}} \cdots c_{i_{r}}$ and $\varphi_{2} =
c_{j_{1}}\cdots c_{j_{t}}$ with $i_{\nu} > codim (\mathcal{F}_{1})$ for
some $\nu \in [1, \dots,r]$ or $i_{s} > codim (\mathcal{F}_{2})$ for some
$s \in [1,\dots ,t]$, then the Baum-Bott residue Res$_{\varphi_{1},\varphi_{2}}(\mathcal{N}_{\mathcal{F}},
\mathcal{F}, S )$ for the flag $ \mathcal{F} = (\mathcal{F}_{1}, \mathcal{F}_{2} )$ is an integral class.

\end{corollary}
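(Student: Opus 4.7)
The plan is to deduce the integrality by combining the theorem just proved with the vanishing of the Nash residue under the stated degree conditions. Since the polynomials $\varphi_1 = c_{i_1}\cdots c_{i_r}$ and $\varphi_2 = c_{j_1}\cdots c_{j_t}$ have integer coefficients, it suffices to show that $\operatorname{Res}_{\varphi_1,\varphi_2}(N^{\nu},\mathcal{F},S^{\nu}) = 0$; then the previous theorem immediately gives $\operatorname{Res}_{\varphi_1,\varphi_2}(\mathcal{N}_{\mathcal{F}},\mathcal{F},S)$ in the image of $H_{2n-2(d_1+d_2)}(S;\mathbb{Z}) \to H_{2n-2(d_1+d_2)}(S;\mathbb{C})$.

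The key observation is a rank comparison for the Nash normal bundles. On $M^{\nu}$, the quotient $N_{12}^{\nu} = p_2^\ast T_2^{\nu} / p_1^\ast T_1^{\nu}$ is an honest vector bundle of rank $q_2 - q_1$, while $p_2^\ast N_2^{\nu}$ has rank $n - q_2$. If some $i_{\nu} > \operatorname{codim}(\mathcal{F}_1) = n - q_1$, then because $q_2 \leq n$ we have $i_{\nu} > n - q_1 \geq q_2 - q_1 = \operatorname{rank}(N_{12}^{\nu})$; if some $i_s > \operatorname{codim}(\mathcal{F}_2) = n - q_2$, this is directly $i_s > \operatorname{rank}(p_2^\ast N_2^{\nu})$. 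In either case one factor of $\varphi_1$ or $\varphi_2$ is a Chern class $c_{\ell}$ of a bundle of rank strictly smaller than $\ell$.

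Next I would exploit the fact that such a Chern class vanishes not merely cohomologically but at the level of Chern-Weil forms themselves: for any connection on a rank-$r$ bundle with curvature $\Omega$, the form $c_{\ell}(\Omega)$ equals the $\ell$-th elementary symmetric polynomial in the eigenvalues of $\tfrac{i}{2\pi}\Omega$, and this vanishes identically whenever $\ell > r$. The same vanishing propagates through the standard interpolation construction that defines the Bott difference forms, since it uses a connection on a pullback bundle of the same rank. Consequently every component of the cocycle $\varphi(^{12}_{2}\nabla_{\ast}^{\nu})$ on the covering $\widetilde{\mathcal{U}}^{\nu} = \{\widetilde{U}_0^{\nu}, \widetilde{U}_1^{\nu}\}$ — namely $\varphi_j(^{j}\nabla_a^{\nu})$ for $a \in \{0,1\}$ together with the Bott difference $\varphi_j(^{j}\nabla_0^{\nu}, ^{j}\nabla_1^{\nu})$ — vanishes as a form. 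The entire relative cocycle in $A^{2(d_1+d_2)}(\widetilde{\mathcal{U}}^{\nu}, \widetilde{U}_0^{\nu})$ is therefore zero, and the Alexander isomorphism yields $\operatorname{Res}_{\varphi_1,\varphi_2}(N^{\nu},\mathcal{F},S^{\nu}) = 0$.

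The main thing requiring care is precisely this passage from cohomological to form-level vanishing of the offending Chern-Weil expressions on $M^{\nu}$; once that is secured, the conclusion is formal, reducing to the elementary rank inequalities above together with a direct invocation of the comparison theorem.
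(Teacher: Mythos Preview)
Your argument is correct and is precisely the intended one: the paper states the corollary without proof because it follows from the preceding comparison theorem together with the rank observation you spell out, namely that on $M^{\nu}$ the bundles $N_{12}^{\nu}$ and $p_2^{\ast}N_2^{\nu}$ have ranks $q_2-q_1$ and $n-q_2$, so a factor $c_{i_\nu}$ with $i_\nu>n-q_1\ge q_2-q_1$ (resp.\ $c_{j_s}$ with $j_s>n-q_2$) kills the Chern--Weil cocycle and hence the Nash residue. This is exactly the flag analogue of the Brasselet--Suwa argument in [\ref{BS}] that the paper is generalizing.
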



\begin{thebibliography}{999}
\addcontentsline{toc}{chapter}{Bibliography}

\bibitem {ABT} \label{ABT} Abate, M.   Bracci, F. and  Tovena, F.  \newblock
{\ \textit{Index theorems for holomorphic maps and foliations}}, Indiana Univ. Math. J. 57 (2008), 2999-3048.

\bibitem {BB1} \label{BB1} Baum, P. and  Bott, R. \newblock
{\ \textit{On the zeroes of meromorphic vector fields}}, Essays on Tololoty and Related Topics. M\'emoires d\'edi\'es  Georges de Rham, Springer, Berlin, 1970, 29-47.

\bibitem {BB} \label{BB} Baum, P. and  Bott, R. \newblock
{\ \textit{Singularities of holomorphic foliations}}, J. Differential Geom. 7(1972), 279-342.

\bibitem {BM}
 Bogomolov, F. and   McQuillan, M. \textit{Rational curves on foliated varieties}. IHES Preprint,
February 2001.

\bibitem {Bra} \label{Bra} Brasselet, J.P. \textit{D\'efinition combinatoire des homomorphismes de 
Poincar\'e, Alexander et Thom pour une pseudo-vari\'et\'e}, Ast\'erisque  82-83, 1981.

\bibitem {BSS} \label{BSS} Brasselet, J.P.,   Seade, J. and  Suwa, T. \newblock
{\ \textit{Vector fields on singular varieties}}, Lecture Notes in Mathematics,
1987. Springer-Verlag, Berlin, 2009.

\bibitem {BS} \label{BS} Brasselet, J. P. and Suwa T. \newblock
{\ \textit{Nash residues of singular holomorphic foliations}},Asian J. Math., 4 (2000), 37-50.

\bibitem {BruPer} \label{BruPer} Brunella, M. and  Perrone C. \newblock
{\ \textit{Exceptional singularities of codimension one holomorphic
foliations}}, Publicacions Matematiques 55 (2011), 295-312.

\bibitem {Ce}
Cerveau, D.  \textit{Une liste de probl\`emes, Ecuaciones Diferenciales Singularidades}, Universidad de Valladolid, pg. 455-460 (1997). 

\bibitem {CorMa} \label{CorMa} Cordero, L. and  Masa, X. \newblock
{\ \textit{Characteristic classes of subfoliations}}, Ann. Inst. Fourier, 31 n. 2, pp. 61-96,
1981.



\bibitem {CoFer} \label{CoFer} Corr\^ea, M. and  Fernandez-Perez, A. \newblock
{\ \textit{Absolutely $k$-convex domains and holomorphic foliations on homogeneous manifolds}}, Preprint arXiv: 1403.4286v5.

\bibitem {CoSo} \label{CoSo} Corr\^ea, M. and  Soares, M. \newblock
{\ \textit{Inequalities for characteristic numbers of flags of distributions and foliations}}, Int. J. Math. 25, 1492001 (2014).



\bibitem {Domin} \label{Domin} Dominguez, D. \newblock
{\ \textit{Sur les classes caract\'eristiques des sous-feuilletages}}, Publ. RIMS, Kyoto Univ.
23 (1987), 813-840.

\bibitem {Fei} \label{Fei} Feigin, B.L. \newblock
{\ \textit{Characteristic classes of flags of foliations}}, Functional Anal. Appl., 9 (1975), 312-317.

\bibitem {Hat} \label{Hat} Hatcher, A. \newblock
{\ \textit{Algebraic topology}}, 2002 by Cambridge University Press.

\bibitem {IL} \label{IL} Ilori, S.A. \newblock
{\ \textit{Flag bundles, Chern class and natural lifts}}, Ann. Mat. Pura Appl. (4) 115 (1977), 155-167 (1978).


\bibitem {Iza} \label{Iza} Izawa, T. \newblock
{\ \textit{Residues for non-transversality of a holomorphic map
to a codimension one holomorphic foliation}}, J. Math. Soc. Japan Vol. 59, N 3 (2007),  899-910.

\bibitem {Jou} \label{Jou} Jouanolou, J. P. \newblock
{\ \textit{Equations de Pfaff alg\'ebriques}}, 1979 Lecture Notes in Mathematics, 708. Springer-Verlag, Berlin.

\bibitem {KST}
Kebekus S. Sola Conde L. and  Toma M., \textit{Rationally connected foliations after Bogomolov and
McQuillan}. J. Algebraic Geom. 16 (2007), no. 1, 65-81.


\bibitem {Mal} \label{Mal} Malgrange, B. \newblock
{\ \textit{Frobenius avec singularit\'es I. Codimension un}}, Inst. Haute tudes Sci. Publ. Math. 46(1976), 163-173.

\bibitem{Miy} \label{Miy} Miyaoka, Y. \newblock
{\ \textit{Deformation of a morphism  along a foliations}}, In S. Bloch, editor, Algebraic Geometry, volume 46 of Proceedings of Symposia in pure Mathematics,
pages 245-269, Providence, Rhode Island, 1985. American Mathematical Society.

\bibitem {Mol} \label{Mol} Mol, R.S. \newblock
{\ \textit{Flags of holomorphic foliations}}, An Acad Bras Cienc 83 (3)
(2011), 775-786.



\bibitem {Se} \label{Se} Sertoz, S. \newblock
{\ \textit{Residues of holomorphic foliations}}, Compositio Math., 70 (1989), pp.227-243.



\bibitem {Suwa} \label{Suwa} Suwa, T. \newblock
{\ \textit{Indices of vector fields and residues of singular holomorphic
foliations}}, Actualit\'es Math\'ematiques, Hermann, Paris 1998.

\bibitem {Suwa1} \label{Suwa1} Suwa, T. \newblock
{\ \textit{Residues of complex analytic foliations singularities}}, J. Math. Soc. Japan., 36 (1984), 37-45.

\bibitem {SuBr} \label{SuBr} Suwa, T. and   Bracci, F. \newblock
{\ \textit{Localization of characteristic classes and applications}}, Notes of a course given by Prof. Tatsuo Suwa in September 2001.









\end{thebibliography}
\end{document}